\numberwithin{equation}{section}
\newtheorem*{Prb}{Problem 1.3}
\newtheorem{thm}{Theorem}[section]
\newtheorem{lem}[thm]{Lemma}
\newcommand{\p}{\partial}
\newcommand{\e}{\varepsilon}
\newcommand{\ph}{\varphi}
\newcommand{\A}{\mathcal{A}}
\newcommand{\D}{\Delta}
\newcommand{\AL}{\mathrm{A}}
\newcommand{\BT}{\mathrm{B}}
\newcommand{\C}{\mathcal{C}}
\newcommand{\GG}{\mathcal{G}}
\newcommand{\FF}{\mathcal{F}}
\newcommand{\X}{\mathcal{X}}
\newcommand{\Y}{\mathcal{Y}}
\newcommand{\wtl}{\widetilde}
\newcommand{\wht}{\widehat}
\newcommand{\crd}{cyclically reduced}
\newcommand{\er}{\eqref}
\newcommand{\cntsd}{contiguity subdiagram}
\newcommand{\rgnb}{regular neighborhood}
\newcommand{\cncm}{connected component}
\newcommand{\ifff}{if and only if}
\begin{document}
\title
[Embedding of groups and quadratic equations over groups]
{Embedding of  groups and quadratic equations over groups}
 \author{D. F. Cummins }
 \address{  Department of Mathematics\\
 United States Military Academy \\
  West Point \\ NY  10996 \\ U.S.A. } \email{desmond.cummins@usma.edu}
 \author{S. V. Ivanov }
 \address{  Department of Mathematics\\
 University of Illinois \\
  Urbana\\  IL 61801\\ U.S.A. } \email{ivanov@illinois.edu}

\thanks{The second author was supported in part by the NSF under grant  DMS 09-01782}
\keywords{Quadratic equations, 2-generated groups, diagrams}
\subjclass[2010]{Primary 20F05, 20F06, 20F70}

\begin{abstract}
We prove that, for every integer $n \ge 2$,   a finite or infinite countable group $G$ can be embedded into a 2-generated group $H$ in such a way that the solvability of quadratic equations of  length at most $n$
is preserved, i.e.,  every quadratic equation  over $G$  of  length at most $n$  has a solution in $G$  if and only if this equation, considered as an equation  over $H$, has a solution in  $H$.
\end{abstract}

\maketitle

\section{Introduction}

It  is  a classical  result of Higman, B. Neumann, and H. Neumann \cite{HNN} that every finite or infinite countable group can be embedded into a 2-generated group. In this note, we are concerned with such an emdedding that would preserve the solvability of every quadratic equation of bounded length.

We start with definitions. Let $G$ be a finite or infinite countable group  and let
 \begin{equation}\label{e0}
    G =  \langle \, a_1, a_2, \dots  \, \| \, R_1=1, R_2=1, \dots  \, \rangle
\end{equation}
be a presentation for $G$ by means of generators $a_1, a_2,  \dots$ and defining relations
$R_1=1, R_2=1,  \dots$, where $R_1, R_2,  \dots$  are nonempty \crd\ words  over the alphabet $\A^{\pm 1} := \{  a_1^{\pm 1} , a_2^{\pm 1},  \dots \}$. If $U$ is a word over $\A^{\pm 1}$ and the  image of $U$ in $G$ is trivial, we write $U \overset G  = 1$ or say that $U = 1$ in $G$.

Let $\X$ be a finite or infinite countable set, called a set of variables,  $\X^{-1} := \{ x^{-1} \mid x \in \X \}$, and $\X^{\pm 1} := \X \cup \X^{-1}$.  Let
$\FF(\X)$ denote the free group with the free base $\X$ and let
$G * \FF(\X)$ denote the free product of $G$ and $\FF(\X)$.  Elements of $G * \FF(\X)$  can  be regarded as words over the alphabet $\Y^{\pm 1}$,  where   $\Y := \A   \cup \X$.

A word  $W = y_1 \dots y_\ell$ over $\Y^{\pm 1}$, where $y_1, \dots, y_\ell \in \Y^{\pm 1}$, is called {\em reduced} if $\ell >0$, i.e., $W$ is not empty,  and  $W $ contains no subwords of the form $y y^{-1}$  or  $y^{-1} y$, where $y \in \Y$.  A word  $W$  over $\Y^{\pm 1}$ is {\em \crd\ }  if $W$ is reduced and every cyclic permutation of $W$ is reduced.
The {\em length} of a word    $W = y_1 \dots y_\ell$    over $\Y^{\pm 1}$  is $\ell = | W|$ and the {\em $\X$-length}   $| W|_\X$ of $W$ is the number of all occurrences of letters of $\X^{\pm 1}$  in the word $W$.
For example, $| a_1 x_1 x_2 a_2^{-1} x_1^{-1} |_\X =3$ if  $a_1, a_2 \in \A$ and $x_1, x_2 \in \X$.

An {\em equation} over $G$ is a formal expression $W = 1$, where $W$ is a \crd\ word over $\Y^{\pm 1}$ with  $| W|_\X >0$.  The  {\em length}  of an equation   $W = 1$ over $G$ is the number  $| W|_\X$.
The  {\em total length}  of an equation   $W = 1$ over $G$ is  $| W|$.
An equation   $W = 1$   over $G$ is called {\em quadratic} if, for every letter $x \in \X$,  the sum of the number of occurrences of $x$ in $W$ and the number of occurrences of $x^{-1}$  in $W$ is either 2 or 0.

We say that an equation   $W = 1$   over $G$ has a {\em solution}    if there exists a
homomorphism $\psi_W : G*\FF(\X) \to G$ which is identical on $G$ and which takes the word $W \in G*\FF(\X)$ to the identity, i.e., $\psi_W |_{G} = \mbox{id}_G$  and  $\psi_W(W) = 1$ in $G$. Let $x_1, \ldots, x_k$ be all letters of $\X$ that occur  in $W$ or in $W^{-1}$. A {\em solution tuple } to the equation $W=1$, defined by a homomorphism $\psi_W : G*\FF(\X) \to G$, is a tuple $(U_1, \dots, U_k)$, where $U_1, \dots, U_k$ are some words over $\A^{\pm 1}$, such that
$\psi_W(x_j) = U_j$ in  $G$ for every $j=1, \dots, k$. The {\em length} of a solution tuple $(U_1, \ldots, U_k)$   to the equation $W=1$ is the sum  $\sum_{j=1}^k |U_j|$.

If $\mu: G\to H$ is a group monomorphism and $W=1$ is an equation over $G$, then we can use $\mu$ and $W=1$ to obtain an equation over $H$ by replacing every letter $a_i^{\e}\in\A^{\pm 1}$, $\e = \pm 1$, that appears in $W=1$ with $\mu(a_i^{\e})$.  This new equation over $H$  is denoted by $\mu(W)=1$.

\begin{thm}\label{thm1} Let $n \ge 2$ be an integer and let $G$ be a finite or infinite countable  group.  Then there exists  an embedding $\mu_n : G \to H$ of $G$ into a 2-generated  group $H = \langle h_1, h_2 \rangle$,  that preserves the solvability of every quadratic equation $W =1$ over $G$ of length  $| W|_\X \le n $, i.e., for every
equation  $W =1$ over $G$  of length at most $n$,  the  equation $W =1$ has a solution in $G$ \ifff\  $\mu_n(W)=1$  has a solution in~$H$.
\end{thm}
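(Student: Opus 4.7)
The forward implication is immediate: given a solution tuple $(U_1,\dots,U_k)$ of $W=1$ in $G$, the tuple $(\mu_n(U_1),\dots,\mu_n(U_k))$ is a solution of $\mu_n(W)=1$ in $H$, because $\mu_n$ is identical on the coefficient part and sends words over $\A^{\pm 1}$ to words over $\{h_1^{\pm 1},h_2^{\pm 1}\}$. All the content is in the converse direction.

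My plan is to construct $\mu_n$ via an HNN-style presentation $H=\langle h_1,h_2 \mid \RR_n\rangle$ where each generator $a_i$ of $G$ is identified with a specific word $W_i(h_1,h_2)$, for instance $W_i = h_1 h_2^{f(i)} h_1^{-1} h_2^{-f(i)}$ with $f$ growing fast enough in $i$ and $n$, and where $\RR_n$ consists of the relators of $G$ rewritten through the substitution $a_i \mapsto W_i$, possibly padded with auxiliary letters. The point of the choice of $W_i$ and the padding is to ensure that the resulting presentation satisfies a small cancellation condition $C'(\lambda_n)$ (or a graded analogue) with $\lambda_n$ small enough relative to $n$, so that the usual van Kampen diagram machinery applies.

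Given a solution tuple $(\wtl U_1,\dots,\wtl U_k)$ of $\mu_n(W)=1$ in $H$, I would substitute it into $\mu_n(W)$ and take a reduced van Kampen diagram $\D$ over $\langle h_1,h_2\mid \RR_n\rangle$ bounded by the resulting trivial word. Because $W$ is quadratic, the $\X$-labeled portion of $\p \D$ decomposes into at most $2n$ arcs that are pairwise identified in pairs, corresponding to the occurrences of $x_j$ and $x_j^{-1}$; in between them sit arcs read as $\mu_n$-images of single letters from $\A^{\pm 1}$. Using contiguity subdiagram estimates from small cancellation theory to bound the size of the $\RR_n$-cells touching the boundary, I would simplify $\D$ until it factors through a diagram $\D'$ over the original presentation \er{e0} of $G$: the boundary letters $\mu_n(a_i)^{\e}$ would collapse syllable-by-syllable back to $a_i^{\e}$, and the arcs assigned to variables would become words $U_1,\dots,U_k$ over $\A^{\pm 1}$ that represent, in $G$, the $G$-factors of the $\wtl U_j$ read along a suitable closed path. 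This tuple $(U_1,\dots,U_k)$ is then the required solution of $W=1$ in $G$.

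The main obstacle I expect is the combined design of $\mu_n$: the words $W_i$ and the relators $\RR_n$ must be simultaneously chosen so that (i) $\mu_n$ is injective, (ii) diagrams $\D$ over $H$ have a small cancellation structure strong enough to force the $\mu_n(a_i)$-arcs on $\p\D$ to survive essentially untouched, and (iii) the rewriting of $\mu_n(a_i)$-syllables on the boundary into single $a_i$-letters produces a \crd\ boundary label so that the resulting planar diagram is genuinely a $G$-diagram encoding a solution. The quadratic hypothesis is crucial throughout because it caps the combinatorial and topological complexity of $\p\D$, which in turn bounds how much small cancellation the presentation $\langle h_1,h_2\mid\RR_n\rangle$ must satisfy; this is why the embedding is allowed to depend on $n$ rather than being universal.
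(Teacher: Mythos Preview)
Your outline misses the two ideas that actually make the argument work. First, you propose to analyze a \emph{disk} diagram $\D$ whose boundary alternates between coefficient arcs $\mu_n(a_i)^{\e}$ and variable-value arcs $\wtl U_j^{\e}$, and to use contiguity estimates to force the cells to behave. But the $\wtl U_j$ are arbitrary words in $h_1,h_2$, so cells of $\D$ can have long contiguity to those boundary arcs rather than to each other; small cancellation alone gives you no mechanism to ``collapse'' such arcs to $\A$-words, and your step (iii) has no content. The paper exploits the quadratic hypothesis in a much stronger way: because each variable occurs exactly twice, one \emph{glues} the matching arcs $r_i$ and $r_{\tau(i)}$ to obtain a \emph{surface} diagram $\wtl\D$ of Euler characteristic $1-\tfrac{\ell}{2}$ whose boundary now consists solely of the coefficient arcs. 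The variable arcs disappear from $\p\wtl\D$ entirely, and an Euler-characteristic count on the graph of maximal contiguity subdiagrams (Lemmas~\ref{gr1} and~\ref{gr2}) then forces any two cells with $h$-edges to share a contiguity arc of length $\ge \tfrac{1}{6n}|q|$, hence to be a reducible pair. This is precisely where the bound $|W|_{\X}\le n$ enters and why the small cancellation constant depends on $n$.

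Second, even after you eliminate $h$-cells, you need the resulting diagram to live over something that retracts onto $G$. Your $H=\langle h_1,h_2\mid\RR_n\rangle$ has only the rewritten $G$-relators, so a diagram with no cells has free boundary---useless for producing a $G$-solution. The paper inserts an intermediate group $\GG_1$ obtained from $G$ by adjoining, for each solvable equation $W_i=1$, a fresh set of variables $\X_i$ together with the relation $W_i(\X_i)=1$; this $\GG_1$ retracts onto $G$ (Lemma~\ref{lem1}). One then works over the larger alphabet $\A\cup\bigcup_i\X_i\cup\{h_1,h_2\}$, so that coefficient arcs are genuinely $h$-free and, once the surface diagram $\wtl\D$ is shown to contain no faces with $h$-edges, it is a diagram over~$\GG_1$ and the retraction finishes the job. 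Without this layer your plan has no endgame.
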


We remark that the embedding $\mu_n : G \to H$  of  Theorem~\ref{thm1}  has additional properties that are of interest even in the case when $G$ is already a 2-generated group.  For example, a solution tuple to a quadratic equation $W=1$ over $G$ such that $|W|_{\X}\leq n$ may be arbitrarily long relative to the original  alphabet $\A$ whereas the equation $\mu_n(W)=1$  has a relatively short solution tuple in $H$ with respect to the  alphabet $\{ h_1, h_2\}$. This and other technical properties of the embedding $\mu_n$, that could be useful for potential
future applications, are recorded in the following.

\begin{thm}\label{thm2}
The embedding  $\mu_n : G \to H$  of  Theorem~\ref{thm1} can be constructed in such a way that
$\mu_n$ has the following properties.

$\rm{(a)}$  Fix an enumeration $W_1 =1$, $W_2 =1, \dots$  of all quadratic equations over $G$ such that, for every $i \ge 1$, $| W_i|_\X \le n $ and $W_i =1$ has a solution in $G$.  Then there is a constant $C >0$ such that, for every $i \ge 1$, there exists a solution tuple to the equation $\mu_n(W_i) =1$ over $H$ whose length, in generators $h_1, h_2 $ of $H$,  does not exceed  $C n^4 i$.

$\rm{(b)}$  Assume that the presentation  \eqref{e0}  for $G$ is recursively enumerable. Then  defining
relations of the 2-generated  group $H = \langle h_1, h_2 \rangle$  can  be  recursively enumerated.

$\rm{(c)}$  Assume that the presentation  \eqref{e0}  for $G$ is decidable and there is an algorithm that detects whether  a quadratic equation over $G$ of length at most $n$ has a solution in $G$. Then the 2-generated  group $H = \langle h_1, h_2 \rangle$ has a decidable set of defining relations and the embedding $\mu_n : G \to H$ can be effectively constructed.
\end{thm}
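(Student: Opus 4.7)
\textbf{Proof plan for Theorem~\ref{thm2}.} The plan is to build the embedding $\mu_n$ of Theorem~\ref{thm1} in an explicit and bookkeeping-aware fashion so that parts (a)--(c) can be read off the construction, rather than proving them as an afterthought. Recall that in the proof of Theorem~\ref{thm1} the group $H$ is obtained from $G * \FF(\{h_1,h_2\})$ by imposing a carefully chosen family of defining relators whose role is twofold: first, to rewrite each generator $a_j\in \A$ as a specific word in $h_1, h_2$ in the spirit of the classical Higman--Neumann--Neumann construction, and second, to kill the spurious solutions to quadratic equations of length $\le n$ that are not already solvable in $G$. Throughout what follows I fix the $h$-encoding of $a_j$ to be a standard short word, for instance $\mu_n(a_j)=h_1^{-j}h_2 h_1^{j}$, so that $|\mu_n(a_j)|$ in the alphabet $\{h_1,h_2\}$ is $O(j)$.

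For part (a), the crucial observation is that a short solution tuple to $\mu_n(W_i)=1$ need not be obtained by pushing a $G$-solution through $\mu_n$ (whose $\A$-length can be arbitrarily large), but rather \emph{read off the construction itself}. For each solvable $W_i=1$ in the fixed enumeration, the construction of $H$ is arranged so that a witnessing relator is imposed which, in $\{h_1,h_2\}$-letters, literally encodes a solution tuple $(V_1,\dots,V_k)$ to $\mu_n(W_i)=1$; this is the mechanism that guarantees the ``solvable $\Rightarrow$ solvable'' direction in Theorem~\ref{thm1}. The quadratic hypothesis forces $k \le n$, and a careful count of the four contributions — at most $n$ variables, at most $O(n)$ occurrences of the $\A$-alphabet in $W_i$, each $h$-encoding of an $\A$-letter of length $O(i)$ (since the letters appearing in $W_1,\dots,W_i$ are drawn from $\{a_1,\dots,a_{O(i)}\}$), and an $O(n)$ factor coming from the combinatorial padding introduced when the solution-encoding relator is assembled — gives total length at most $Cn^4 i$. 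The actual verification reduces to a polynomial accounting of these four factors once the form of the construction is fixed.

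For (b), assume \eqref{e0} is recursively enumerable. Then the quadratic words of length $\le n$ over $\Y^{\pm 1}$ can be effectively listed, and a uniform procedure attempts, on one side, to enumerate candidate solution tuples over $G$ and, on the other, to enumerate potential proofs of unsolvability via the combinatorial machinery (diagrams, contiguity subdiagrams) used in the proof of Theorem~\ref{thm1}. Each discovered case produces either a solution-encoding relator or a killer relator, and the resulting list of defining relations of $H$ is therefore recursively enumerable. For (c), decidability of \eqref{e0} combined with an algorithm for the quadratic equation problem over $G$ of length $\le n$ replaces the above ``race'' between the two enumerations by a decision at each stage: given $W_i$ we effectively determine which of the two kinds of relator to impose, so the presentation of $H$ becomes decidable and the construction of $\mu_n$ is algorithmic.

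The main obstacle will be part (a). The difficulty is that an equation $W_i=1$ may admit only extremely long solutions in $G$ (growing uncontrollably with $i$), and the naive bound from $\mu_n$ applied to any such solution is of no use. The resolution is precisely the ``solution-encoding relator'' idea above: the length bound must be baked into the construction from the start, by insisting that whenever $W_i=1$ is solvable the relator added to $H$ records a short $\{h_1,h_2\}$-solution, whose length is controlled by the combinatorial parameters of the diagram construction rather than by any feature of the original $G$-solution. Verifying that such short solutions can always be produced during the construction, and that their lengths aggregate to $Cn^4 i$, will be the delicate technical content of the proof.
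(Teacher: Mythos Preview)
Your high-level idea for part (a) is correct and matches the paper: the short solution to $\mu_n(W_i)=1$ is read off the construction rather than transported from a $G$-solution. But several concrete details are off, and your description of parts (b)--(c) rests on a misconception about the construction.

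First, the encoding. You propose $\mu_n(a_j)=h_1^{-j}h_2 h_1^{j}$, of length $O(j)$. That is not what the paper does, and it would not work: the entire ``unsolvable in $G$ $\Rightarrow$ unsolvable in $H$'' direction of Theorem~\ref{thm1} depends on the small-cancellation properties of the specific words $V_i$ of~\eqref{e2}, which have length of order $M^2 i = (24n)^2 i$, not $O(i)$. This matters for part (a): the factor $n^4$ in the bound $Cn^4 i$ comes precisely from $M^2=(24n)^2$ times the $n$ variables times the index bound. Your four-factor accounting (number of $\A$-letters in $W_i$, $O(i)$ encodings, an $O(n)$ padding factor) does not match this and contains an irrelevant term: the $\A$-letters of $W_i$ play no role in the length of a \emph{solution tuple}, which by definition is $\sum_j |U_j|$ over the variable substitutions only. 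The paper's count is simply: the variables occurring in $W_i(\X_i)$ are $x_{k_1},\dots,x_{k_\ell}$ with $\ell\le n$ and each $k_j\le ni$; the relations $x_j=V_{2j}$ make $(V_{2k_1},\dots,V_{2k_\ell})$ a solution; and $\sum_{j}|V_{2k_j}|\le n\cdot M(M(2ni+1)+1)\le 3n^2M^2 i = Cn^4 i$.

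Second, and more seriously, there are no ``killer relators'' in the construction. The defining relations of $H$ in the presentation~\eqref{prh} are exactly the $\wht R_i$ (rewritten relators of $G$) and the $\wht W_i$ (rewritten $W_i(\X_i)$ for each \emph{solvable} equation in the enumeration); nothing is imposed for unsolvable equations. The fact that unsolvable equations stay unsolvable in $H$ is the content of the diagram argument in the proof of Theorem~\ref{thm1}, not a consequence of added relators. Consequently, your plan for (b) --- a ``race'' between enumerating solutions and enumerating proofs of unsolvability --- is unnecessary and misdirected: to recursively enumerate the relations of~\eqref{prh} one only needs to recursively enumerate the \emph{solvable} quadratic equations of length $\le n$, which one does by dovetailing over all candidate equations and all candidate solution tuples in $G$ (using that the word problem data of $G$ is r.e.). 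For (c) the point is simply that with the decidability hypotheses one can \emph{decide}, for each candidate $W$, whether it belongs in the list~\eqref{enum0}, so the enumeration becomes effective and the relator set of~\eqref{prh} is decidable.
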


As an example of a quadratic equation,  consider the  equation  $x U_1 x^{\e} U_2  = 1$, where $\e = \pm 1$ and $U_1, U_2$ are some reduced (or possibly empty if $\e = 1$) words over $\A^{\pm 1}$. Note that if $\e =-1$ then this equation has a solution \ifff\
the elements of $G$, represented by the words  $U_1, U_2^{-1}$, are conjugate in $G$. If $\e =1$, then this equation has a solution \ifff\ the element of $G$, represented by the word  $U_1^{-1} U_2$,  is a square in $G$, i.e., there is a word $T$ over $\A^{\pm 1}$ with $U_1^{-1} U_2 \overset G = T^2$.     According to  Theorem~\ref{thm1} applied with $n=2$,
if $G$ is a finite or infinite countable group, then $G$ embeds into a 2-generated group $H$, $\mu_2 : G \to H$, in which two elements of $\mu_2(G)$ are conjugate  \ifff\  they are  conjugate in $G$ and every element of $\mu_2(G)$ is a square in $H$  \ifff\  it is a  square in $G$. This is reminiscent of an embedding result of  Ol'shanskii and Sapir
\cite{OS} that states  that a finitely generated group $G$ with the solvable conjugacy problem  can  be embedded into a finitely presented group $K$ with the solvable conjugacy problem, $\sigma : G \to K$, in such a way that two elements of $\sigma(G)$ are conjugate in $K$ \ifff\ they are conjugate in $G$.
\smallskip

It would be of interest to find out whether Theorem~\ref{thm1} generalizes to arbitrary equations of bounded length
and whether one could drop the upper bound on the length of quadratic equations in Theorem~\ref{thm1}.
The first question seems to be technically relevant to the following interesting problem.

\begin{Prb} For given integer $n >0$, does there exist a real number $\lambda >0$  such that if a presentation \eqref{e0} satisfies the small cancelation condition $C'(\lambda)$, for every relation $R=1$ of  \eqref{e0}, $|R| >\lambda^{-1}$ and $R$ is not a proper power,
then every equation $W=1$ over $G$ of total length $|W| \le n$  has a solution in $G$ \ifff\ the equation $W=1$, considered as an equation over the free group $F(\A)$, has a solution in $F(\A)$?
\end{Prb}

We remark that for quadratic equations of total length $\le n$ this problem would likely have a positive solution and a proof would be analogous to the proof of Theorem~\ref{thm1} with additional consideration of contiguity subdiagrams between boundary paths of faces of type F3 and boundary paths of a surface diagram $\D$.
We also mention that the arguments of Frenkel and Klyachko \cite{FK}, which are used to prove that a nontrivial commutator cannot be a proper power in a torsion-free group $G$ that satisfies the small cancelation  condition $C'(\lambda)$ with $\lambda \ll 1$, might be useful for making some progress in nonquadratic case.
\smallskip

At the suggestion of the referee, we mention that
connections between compact surfaces and solutions of quadratic equations in free groups, free products and in hyperbolic groups were first studied by Culler \cite{Cq} and Ol'shanskii \cite{Olq}.
Earlier work on quadratic equations in free groups and in free products was done by
Edmunds \cite{E1}, \cite{E2}, Comerford and Edmunds \cite{CE}, see also articles cited in \cite{E1}, \cite{E2}, \cite{CE}. The bound of Theorem 1.2(a) is reminiscent of bounds on the length of a minimal solution of quadratic
equations in free groups obtained by Lysenok and Myasnikov \cite{LM} and by Kharlampovich and Vdovina \cite{KV}.

\section{Group Presentations and Diagrams}

Fix an even integer $n \ge 2$. Since we consider quadratic equations  $W =1$ of length $ | W|_\X \le n $, we may assume that the cardinality of $\X$ is $n$,  $| \X | = n$. Since $G$ is finite or countably infinite, we can choose an enumeration
\begin{equation}\label{enum0}
W_1 =1 , \ W_2 =1, \ldots,
\end{equation}
of all  quadratic equations over $G$ such that, for every $i \ge 1$,   $| W_i|_\X \le n $ and $W_i =1$ has a solution in $G$. Let $\cup_{i=1}^\infty  \X_i$  be an infinite countable alphabet consisting of disjoint copies $\X_i, i =1,2, \dots$, of $\X$. Let  $W_i(\X_i)$ denote the word over
the alphabet   $\A^{\pm 1} \cup \X_i^{\pm 1}$  obtained by rewriting $W_i$ so that every letter $b \in \A^{\pm 1}$  of $W_i$ is unchanged and every letter  $y$ of $W_i$, such that $y \in \X^{\pm 1}$,  is  replaced with $\beta_i(y) \in  \X_i^{\pm 1}$, where $\beta_i : \X^{\pm 1} \to  \X_i^{\pm 1}$ is a bijection such that $\beta_i( \X) = \X_i$ and   $\beta_i(x^{-1}) = \beta_i(x)^{-1}$ for every $x \in \X$.

Consider the following  group presentation
 \begin{equation}\label{e1}
    \GG_1 =  \langle \,  \cup_{i=1}^\infty \X_i  \cup  \A   \,  \|  \, R_1=1, \,  R_2=1, \dots , \,   W_1(\X_1) =1 , \,  W_2(\X_2) =1  , \dots  \,  \, \rangle
\end{equation}
whose generating set is $\cup_{i=1}^\infty \X_i  \cup  \A$ and whose  defining relations are those of \er{e0}  and  $W_i(\X_i) =1, \, i=1,2,\dots$.

 \begin{lem}\label{lem1}  There is a   natural embedding  of the   group $G$  into the group $\GG_1$  given by presentation \er{e1}, denoted $\nu_1 : G \to \GG_1$. Furthermore, if $W=1$ is an equation over  $G$ then $W=1$ has a solution in $G$ \ifff\   the  equation  $\nu_1(W)=1$  has a solution in the group $\GG_1$.
\end{lem}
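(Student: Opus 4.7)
The plan is to construct an explicit retraction $\rho_1 : \GG_1 \to G$ with $\rho_1 \circ \nu_1 = \mbox{id}_G$; this single construction will yield both the injectivity of the natural map $\nu_1$ and the mechanism for pulling solutions back from $\GG_1$ to $G$.

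First I would define $\rho_1$ on the generators of $\GG_1$ as follows. By the choice of the enumeration \er{enum0}, every listed equation $W_i =1$ has a solution in $G$, so I fix once and for all a solution tuple $(U_{i,1},\dots, U_{i,k_i})$ of words over $\A^{\pm 1}$, where $x_1,\dots, x_{k_i}$ are the letters of $\X$ occurring in $W_i$ or $W_i^{-1}$. Put $\rho_1(a) := a$ for every $a \in \A$ and $\rho_1(\beta_i(x_j)) := U_{i,j}$ for $i \ge 1$, $j = 1,\dots, k_i$, and extend $\rho_1$ trivially to any letter of $\X_i$ that does not occur in $W_i$. To extend $\rho_1$ to a homomorphism $\GG_1 \to G$ it suffices to check that each defining relator of \er{e1} maps to $1$ in $G$: the relators $R_j$ of \er{e0} are preserved because $\rho_1$ fixes $\A$ pointwise, while each relator $W_i(\X_i)$ is sent by $\rho_1$ to the word obtained by substituting $U_{i,j}$ for $\beta_i(x_j)$ in $W_i(\X_i)$, which equals $1$ in $G$ by the defining property of the solution tuple.

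The composition $\rho_1 \circ \nu_1$ agrees with the identity on the generators of $G$, hence $\nu_1$ is injective. For the second assertion, the direction "a solution in $G$ yields a solution in $\GG_1$" is immediate: the same solution tuple, read via $\nu_1$, solves $\nu_1(W)=1$ in $\GG_1$. Conversely, suppose $\nu_1(W)=1$ has a solution tuple $(V_1, \dots, V_k)$ of words in the generators of $\GG_1$. Apply $\rho_1$ letterwise to each $V_\ell$ to obtain a word $\rho_1(V_\ell)$ over $\A^{\pm 1}$. Since $\rho_1|_G = \mbox{id}_G$, substituting $\rho_1(V_\ell)$ for $x_\ell$ in $W$ yields the $\rho_1$-image of the corresponding substitution in $\nu_1(W)$, which is $\rho_1(1) = 1$ in $G$. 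Thus $(\rho_1(V_1),\dots, \rho_1(V_k))$ is a solution tuple to $W =1$ in $G$.

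No real obstacle is expected here: the whole argument is the standard retraction trick used to justify Higman-Neumann-Neumann-style extensions. The conceptual point worth underlining is that the hypothesis built into the enumeration \er{enum0} (every $W_i=1$ listed there already has a solution in $G$) is precisely what makes $\rho_1$ well defined; without it, adjoining the relators $W_i(\X_i)=1$ could impose new relations among the letters of $\A$ and collapse the image of $G$.
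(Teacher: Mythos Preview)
Your proof is correct and follows essentially the same route as the paper: both construct a retraction $\GG_1 \to G$ (your $\rho_1$ is the paper's $\psi_\infty$) by fixing $\A$ and sending each $x_{i,j}$ to a component of a chosen solution tuple for $W_i=1$, then use this retraction simultaneously to get injectivity of $\nu_1$ and to pull solutions back from $\GG_1$ to $G$. The only cosmetic difference is that you spell out both directions of the solvability equivalence explicitly, whereas the paper dispatches them in a single sentence.
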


\begin{proof} Denote
$\X_i =\{x_{i,1},\dots, x_{i,n} \}$ for $i=1,2, \dots$. Since the equation $W_i(\X_i) = 1$ has a solution in $G$, there exists a homomorphism
$\psi_i : G * \FF(\X_i) \to G$ such that $\psi_i$ is identical on $G$  and $\psi_i( W_i(\X_i) )=1$.  Let $U_{i,1}, \dots, U_{i,n}$ be
words over $\A^{\pm 1}$ such that $\psi_i( x_{i,j} )= U_{i,j}$ in $G$, $j=1, \dots, n$.  Then the map
$\psi_\infty( x_{i,j} ) := U_{i,j}$, where $1 \le j \le n$, $i=1,2, \dots$, and $\psi_\infty(a) := a$ for all $a \in \A$ induces a homomorphism $\psi_\infty : \GG_1 \to G$  which is identical on $G$. Hence, the group $G$ embeds in $\GG_1$. The existence of this homomorphism  $\psi_\infty : \GG_1 \to G$ also implies that, for an arbitrary equation $W=1$ over $G$, the equation $W=1$ has a solution in $G$ \ifff\  the equation $\nu_1(W)=1$ over $\GG_1$ has a solution in $\GG_1$.
\end{proof}

Denote $M := 24n$. For every  $i \ge 1$, consider a word $V_i$ over the alphabet  $\{ h_1, h_2 \}$  defined by the formula
\begin{align}\label{e2}
 V_i = V_i(h_1, h_2)  :=  h_1 h_2^{M i+1 }   h_1 h_2^{M i+2 } \dots h_1 h_2^{M (i+1)-1} h_1 h_2^{M (i+1) }  h_1   .  \end{align}

The literal (or letter-by-letter) equality of two words $U, V$  is denoted $U \equiv V$.
In the following lemma, we establish a small cancelation  condition for the words $V_i$, $i=1,2\dots$.

\begin{lem}\label{lem2}  Let $U$ be a subword of both words $V_i$ and $V_j$, defined by \er{e2}, so
$V_i \equiv V_{i,1} U V_{i,2}$ and $V_j  \equiv V_{j,1} U V_{j,2}$.
Then either $|U| < \tfrac  4 M \min \{ | V_i| , | V_j | \}$ or   $i=j$ and $V_{i,1} \equiv  V_{j,1}$.
\end{lem}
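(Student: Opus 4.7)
The plan is to classify the common subword $U$ by the number $s \ge 0$ of occurrences of $h_1$ it contains. Any subword of $V_i$ admits a unique factorization
$U \equiv h_2^{a_0} h_1 h_2^{a_1} h_1 \cdots h_1 h_2^{a_s}$,
in which $a_0, a_s \ge 0$ while each interior exponent $a_k$ (for $1 \le k \le s-1$) equals the length of a complete $h_2$-run of $V_i$, and likewise of $V_j$, flanked by two consecutive occurrences of $h_1$.

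I would first dispose of the case $s \le 1$ by a direct length estimate. The longest $h_2$-run in $V_i$ has length $M(i+1)$, so $|U| \le M(i+1)$ when $s = 0$ and $|U| \le 2M(i+1)$ when $s = 1$, bounding $U$ by the two $h_2$-runs flanking its unique $h_1$. Combined with
\[
|V_i| \;\ge\; \sum_{k=Mi+1}^{M(i+1)} k \;=\; \tfrac{1}{2}\bigl(M^2(2i+1)+M\bigr),
\]
this yields $\tfrac{4}{M}|V_i| \ge 2M(2i+1) > 2M(i+1)$ for every $i \ge 1$, and symmetrically for $V_j$, so the first alternative $|U| < \tfrac{4}{M}\min\{|V_i|,|V_j|\}$ of the lemma holds.

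Next I would address the main case $s \ge 2$, where $U$ contains at least one complete interior run $h_2^{a_1}$ pinned between two $h_1$'s. Numbering the $M+1$ occurrences of $h_1$ in $V_i$ from left to right by $1, \dots, M+1$, let $t$ be the index of the first $h_1$ of $U$ viewed as a subword of $V_i$; because $h_2^{a_1}$ is a full run one has $t \le M$, and definition \eqref{e2} forces $a_1 = Mi + t$. Applying the same reasoning in $V_j$ yields an index $t' \in \{1, \dots, M\}$ with $a_1 = Mj + t'$, whence $M(i-j) = t' - t$. Since $|t'-t| \le M-1 < M$, this forces $i = j$ and then $t = t'$. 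Consequently the first $h_1$ of $U$ occupies the same position in $V_i = V_j$ under both embeddings; the prefixes preceding it coincide, so $V_{i,1} \equiv V_{j,1}$, which is the second alternative.

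The only detail requiring attention is that the partial end exponents $a_0, a_s$ contribute no additional constraint: once the first interior $h_1$ of $U$ is anchored to the same position in $V_i$ and $V_j$, the rest of $U$ automatically aligns. There is no real conceptual obstacle; the entire argument rests on a single combinatorial fact, namely that the $h_2$-exponents of $V_i$ form the strictly increasing run $Mi+1, \dots, M(i+1)$ and that the intervals $[Mi+1, M(i+1)]$ are pairwise disjoint for distinct values of $i$, so a single complete interior $h_2$-run of $U$ already recovers the index $i$ uniquely.
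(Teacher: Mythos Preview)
Your argument is correct and is essentially the paper's proof, organized contrapositively: the paper assumes $|U|\ge\tfrac{4}{M}|V_i|$, bounds $|U|>2M(i+1)+2$, and concludes that $U$ must contain a full block $h_1h_2^k h_1$, whereas you split on the number $s$ of $h_1$'s in $U$ and show directly that $s\le1$ forces $|U|\le 2M(i+1)<\tfrac{4}{M}|V_i|$. In both cases the decisive step is identical---a single complete interior run $h_1h_2^k h_1$ with $k\in[Mi+1,M(i+1)]$ pins down $i$ and the position uniquely because these intervals are pairwise disjoint.
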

\begin{proof}   Suppose that $U$ is a subword of the word $V_i$, where $i =1,2,\dots$,   and $|U| \ge \tfrac  4 M  | V_i|$. Then
\begin{align}\label{e2a}
    |U|  \ge \tfrac  4 M  | V_i| > 4(Mi +2) >  2M(i +1) +2   .
\end{align}
Since every maximal power of $h_2$ in $V_i$ is no longer than $M(i+1)$, it follows from \er{e2a} that $U$ contains a subword of the form  $h_1 h_2^k h_1$, where  $Mi+1 \le k \le M(i+1)$.  Now our claim follows from the fact
that each word $V_1, V_2, \dots$ contains a unique subword of the form $h_1 h_2^k h_1$, where $Mi+1 \le k \le M(i+1)$.
\end{proof}

Let $\cup_{i=1}^\infty \X_i  = \{ x_1, x_2, \dots \}$ be an enumeration of elements of $\cup_{i=1}^\infty \X_i$ compatible with the enumeration of sets $\X_i$, i.e., if $x_j \in \X_k$, $x_{j'} \in \X_{k'}$ and $k <k'$, then  $j <j'$.
Using this enumeration,  new generators  $h_1, h_2 $ and the words $V_i(h_1, h_2)$, we extend the presentation \er{e1}  as follows
 \begin{align}\notag
    \GG_2 =  & \langle \,  \cup_{i=1}^\infty \X_i  \cup  \A \cup  \{ h_1, h_2 \}   \,  \|  \, R_1=1, \,  R_2=1, \dots , \,  W_1(\X_1) =1 ,
      \\   \label{e3} &       W_2(\X_2) =1  , \,  \dots  ,  \,    x_i V_{2i}^{-1} =1 ,   \,  a_i  V_{2i+1}^{-1} = 1 , \,  i=1,2, \dots  \, \rangle .
\end{align}

To study this group presentation and quadratic equations over $\GG_2 $, we will use diagrams over the presentation \er{e3}. We start with basic definitions.

Let $\D$ be a finite 2-complex and let $\D(i)$ denote the set of closures of   $i$-cells  of $\D$, $i=0,1,2$. The elements of  $\D(i)$ are called {\em vertices, edges, faces} of $\D$ if  $i=0, 1, 2$, resp.
We also consider the set $\vec \D(1)$ of oriented  1-cells of  $\D$.
If $e \in  \vec \D(1)$,  then $e^{-1} $ denotes $e$ with opposite orientation.
For every $e \in \vec \D(1)$,  let $e_-$, $e_+$ denote the initial, terminal, resp., vertices of $e$.
In particular, $(e^{-1})_- = e_+$ and $(e^{-1})_+ = e_-$.  Note that $e \ne e^{-1}$.

A path $p = e_1 \dots e_\ell$ in $\D$ is a sequence of oriented edges $e_1, \dots, e_\ell$ of $\D$ with
$(e_i)_+ = (e_{i+1})_-$, $i =1,\dots, \ell-1$. The length of a path $p= e_1 \dots e_\ell$ is $|p| = \ell$.
The initial vertex of $p$ is $p_- := (e_1)_-$ and the terminal vertex of $p$ is $p_+ := (e_\ell)_+$. A path $p$ is called {\em closed} if $p_- = p_+$. A path $p$ is called {\em reduced} if $|p|>0$ and  $p$ contains no subpath of the form $e e^{-1}$, where $e$ is an  edge.  A {\em cyclic} path is a closed path with no distinguished initial vertex.  A path $p= e_1 \dots e_\ell$ is called {\em simple} if the  vertices $(e_1)_-, \dots,   (e_\ell)_-, (e_\ell)_+$ are all distinct. A closed path is  {\em simple} if the  vertices $(e_1)_-, \dots,   (e_\ell)_-$  are all distinct.

A {\em  diagram} $\D$ over presentation \eqref{e3} is a connected finite 2-complex which is equipped with a labeling function
$$
\ph : \vec \D(1) \to  \cup_{i=1}^\infty \X_i^{\pm 1}  \cup  \A^{\pm 1} \cup  \{ h_1^{\pm 1}, h_2^{\pm 1}, 1 \}
$$
such that, for every $e \in \vec \D(1)$, one has $\ph(e^{-1}) = \ph(e)^{-1}$, where $1^{-1} := 1$, and, for every face $\Pi$ of $\D$, if $\p \Pi = e_1 \dots e_\ell$ is a boundary path of $\Pi$, where $e_1, \dots, e_\ell \in \vec \D(1)$,
then the label  $\ph(\p \Pi) :=   \ph(e_1) \dots \ph(e_\ell)$  of  $\p \Pi$ has   one of the following three forms.
\begin{enumerate}
\item[(F1)] $\ph(\p \Pi) = 1^\ell$.
\item[(F2)]  $\ell= 4$ and $\ph(\p \Pi)$ is a cyclic permutation of a word $y 1 y^{-1} 1$, where $y \in  \cup_{i=1}^\infty \X_i  \cup  \A \cup  \{ h_1, h_2 \}$.
\item[(F3)] $\ph(\p \Pi)$ is a cyclic permutation of one of the words $R^{\pm 1}$, where $R = 1$ is a relation of the presentation \er{e3}.
\end{enumerate}

A face $\Pi$ of $\D$ is said to have {\em type F1, F2, F3}  if $\ph(\p \Pi)$ has the form (F1), (F2), (F3), resp.
The set of faces of type $Fj$ is denoted $\D_j(2)$, $j =1,2,3$.

An edge $e \in \vec \D(1)$ is called an {\em  $a$-edge, $x$-edge, $h$-edge, $1$-edge } if
$\ph(e) \in \A^{\pm 1}$, $\ph(e) \in \cup_{i=1}^\infty  \X_i^{\pm 1}$, $\ph(e) \in \{ h_1^{\pm 1},  h_2^{\pm 1}  \}$,
$\ph(e) = 1$, resp.   An edge $e \in \vec \D(1)$ is termed  {\em  essential } if $e$ is not a $1$-edge.

We will say that $\D$ is a {\em surface diagram of type $(k, k')$} over \eqref{e3}   if $\D$ is a diagram over \eqref{e3} and    $\D$,  as a topological space, is homeomorphic to a compact (orientable or nonorientable) surface  that has Euler characteristic $k$ and contains $k'$ punctures.
This surface is called the {\em underlying} surface for $\D$.
In particular, $\D$ is called  a {\em disk} diagram if $\D$ is a surface diagram  of type $(1, 1)$, hence, the underlying surface for $\D$ is a disk.

If $\D$ is a surface diagram and the underlying surface is orientable, then a fixed orientation of  the underlying surface makes it possible to define
positive (=counterclockwise) and  negative (=clockwise) orientation for  boundaries of faces of $\D$ and for connected components of $\p \D$.
Regardless of whether the underlying surface is orientable or not, we always consider the boundary $\p \Pi$ of  a face $\Pi$ of $\D$  or a  connected component $c$ of the boundary $\p \D$ of $\D$  as a cyclic  path which is called  a {\em boundary path} of $\Pi$ or  a {\em boundary path} of $\D$, resp.   Note that  $(\p \Pi)^{-1}$ or $c^{-1}$   are also  boundary paths of $\Pi$ or $\D$, resp.,  with the  opposite orientation.

Suppose that $\D$ is a surface diagram over  \er{e3}. Making refinements of $\D$ by using faces of type F1, F2
if necessary (informally, we ``thicken" boundary paths of faces of type F3 and $\p \D$, this should be evident; more formal details can be found in \cite{Ol}), we may assume that the following property holds for $\D$.

\begin{enumerate}
\item[(A)]  Suppose that each of $c_1, c_2$ is either  a boundary path of a face of type F3 in $\D$ or a  boundary path  of $\D$. Then $c_1, c_2$ are closed simple paths  and  either $c_1$ is a cyclic permutation of one of $c_2$, $c_2^{-1}$ or $c_1, c_2$ have no common vertices.
\end{enumerate}

Note that the property (A) implies that if an essential edge $e$ of $\D$ belongs to  a boundary path of  a face of type F3 or  $e$ belongs to  a boundary path of  $\D$, then $e$ also belongs to  a boundary path of a face of type F2.

From now on we always assume, unless stated otherwise, that a diagram is a surface diagram over \er{e3} with
the property (A).

Recall that the literal (or letter-by-letter) equality of the words $U, V$  is denoted $U \equiv V$.

\begin{lem}\label{vk}  Let  $W$ be a nonempty word  over  the alphabet
$$\cup_{i=1}^\infty \X_i^{\pm 1}  \cup  \A^{\pm 1} \cup  \{ h_1^{\pm 1}, h_2^{\pm 1}, 1 \} $$ and let $\GG_2$ be the group defined by presentation \er{e3}. Then $W \overset {\GG_2} = 1$ if and only if there is a surface diagram  $\D$ of type $(1,1)$, called a {\em disk diagram},   over  presentation \er{e3} such that $\ph( \p \D ) \equiv W$.
\end{lem}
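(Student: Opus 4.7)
The plan is to prove both directions of the equivalence by the standard van Kampen argument, adapted to accommodate the formal ``trivial'' letter $1$ and the auxiliary face types F1 and F2; the point is that all such face boundary labels represent the identity in $\GG_2$, so only the F3 faces contribute genuine relations.

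For the $(\Leftarrow)$ direction, I would argue that the boundary label of any surface diagram $\D$ of type $(1,1)$ over \er{e3} equals a product of conjugates of the face boundary labels, taken in the free group on the labeling alphabet (with $1$ treated as a formal letter). This is the classical cut-and-paste argument for van Kampen diagrams, done by induction on the number of faces. It then remains to observe that each face label is trivial in $\GG_2$: an F1 label is $1^\ell$, trivially $1$; an F2 label $y \cdot 1 \cdot y^{-1} \cdot 1$ equals $y y^{-1} = 1$ in $\GG_2$; and an F3 label is a cyclic permutation of $R^{\pm 1}$ for some defining relation $R = 1$ of \er{e3}. Hence $\ph(\p \D) \equiv W$ forces $W \overset{\GG_2}{=} 1$.

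For the $(\Rightarrow)$ direction, let $W'$ be the word obtained from $W$ by deleting every occurrence of $1$. Since $1$ is identified with the identity in $\GG_2$, also $W' \overset{\GG_2}{=} 1$. Apply the classical van Kampen lemma to the presentation \er{e3} (temporarily ignoring $1$-edges and the face types F1, F2) to obtain a disk diagram $\D'$ containing only F3 faces with $\ph(\p \D') \equiv W'$. Next, build $\D$ from $\D'$ by attaching a collar of F1 and F2 faces along its boundary so that each occurrence of $1$ in $W$ appears as a $1$-edge in the boundary path of $\D$ at the correct position. The two basic local moves are: attaching an F2 face along a boundary edge $e$ with $\ph(e) = y$ replaces $e$ in the new boundary with a path of labels $1 \cdot y \cdot 1$ (via a new $y$-labeled edge on the outside and two new $1$-edges on the sides); attaching an F1 face of boundary length $\ell$ along a boundary $1$-edge replaces that edge with $\ell - 1$ new $1$-edges. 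Since the collar is an annulus attached to the boundary circle of the disk $\D'$, the underlying topological space of $\D$ remains a disk.

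The main obstacle is the bookkeeping in this collar construction: the F2 move inserts a $1$-edge on \emph{both} sides of the $y$-edge to which it is attached, so consecutive F2 gadgets along the boundary of $\D'$ share their induced $1$-edges at the common vertex, and one must use F1 faces of appropriate length to match the exact number of consecutive $1$'s between any two non-$1$ letters of $W$. These are routine diagrammatic manipulations analogous to the refinement arguments described in \cite{Ol}; once the boundary label is correct, one may perform a further round of F1/F2 refinements, as indicated in the paragraph preceding the lemma, to enforce property (A) on the resulting disk diagram.
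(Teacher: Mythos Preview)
Your argument is the standard van Kampen argument the paper defers to (citing \cite{Ol}, \cite{Iv94}, \cite{LS}), and the two directions are handled correctly in outline. One point deserves tightening: the classical van Kampen lemma produces a simply connected planar $2$-complex, not in general a surface diagram of type $(1,1)$ in the paper's sense---the underlying space need not be homeomorphic to a disk (consider $W'$ freely trivial, or any diagram with a cut vertex). The paper's one-line proof emphasizes that the purpose of the F1/F2 faces is precisely to \emph{thicken} such a complex into a genuine disk; your collar construction accomplishes this as a side effect, but the sentence ``since the collar is an annulus attached to the boundary circle of the disk $\D'$, the underlying topological space of $\D$ remains a disk'' presupposes that $\D'$ was already a disk. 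Rephrase so that the F1/F2 attachments are understood first as thickening $\D'$ into a disk (and simultaneously arranging the boundary label), and the argument goes through. The edge case $W'$ empty (i.e.\ $W \equiv 1^k$) should also be mentioned, handled by a single F1 face.
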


\begin{proof} The proof is straightforward, for details the reader is referred to \cite{Ol}, \cite{Iv94}, see also  \cite{LS}. As in \cite{Ol},  faces of type  F1, F2 make it possible to ``thicken" the diagram and turn its underlying topological space into a disk.
\end{proof}

Suppose that $\Psi$ is a finite graph on a compact surface $S$.
Consider the following property of $\Psi$ in which $m \ge 2$ is an integer parameter.

\begin{enumerate}
\item[(B)]  If  $f$ is an oriented edge of $\Psi$ with $f_-=f_+$ then the  edge $f$  does not bound a disk on $S$ whose interior contains no vertices of $\Psi$. Furthermore, if $f_1, \dots, f_m$ are oriented edges of $\Psi$ such that $(f_i)_- =(f_j)_-$ and $(f_i)_+ =(f_j)_+$ for all $i,j=1,\dots, m$, then it is not true that each path $f_1f_2^{-1}$, $f_2 f_3^{- 1}, \dots$, $f_{m-1}f_m^{-1}$ bounds a disk on $S$ whose interior contains no vertices of $\Psi$.
\end{enumerate}

We finish this section with a lemma about graphs on surfaces.

\begin{lem}\label{gr1}
Let  $S$ be a compact surface  whose Euler characteristic is $\chi(S) =k$ and let $\Psi$ be a finite graph on $S$ that has the property (B) with parameter $m=2$. If $V_{\Psi}$ and $E_{\Psi}$ denote the number of vertices and nonoriented edges of $\Psi$, resp., then $E_{\Psi} \le 3( V_{\Psi} -k)$.
\end{lem}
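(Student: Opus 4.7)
My plan is to run the classical Euler-characteristic argument that underlies the bound $E \le 3V - 6$ for simple planar graphs, adapted to an arbitrary compact surface $S$ with property (B) standing in for simplicity. Additivity of the Euler characteristic yields
$$ k = \chi(S) = \chi(\Psi) + \chi(S \setminus \Psi) = V_\Psi - E_\Psi + \sum_i \chi(C_i), $$
where the $C_i$ are the connected components of the open complement $S \setminus \Psi$. Each $C_i$ is an open connected surface, so $\chi(C_i) \le 1$, with equality if and only if $C_i$ is an open disk. Writing $F_d$ for the number of disk components and discarding the nonpositive contributions of the non-disk ones yields the face-count bound $F_d \ge E_\Psi - V_\Psi + k$.

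The second step is to invoke property (B) with $m = 2$ to argue that the boundary walk of every disk component has length at least $3$. A length-$1$ walk consists of a single loop $f$ (with $f_- = f_+$) bounding a disk whose interior has no vertices of $\Psi$, which is exactly what the first clause of (B) forbids. A length-$2$ walk $f_1 f_2$ satisfies $(f_1)_+ = (f_2)_-$ and $(f_2)_+ = (f_1)_-$; setting $g_1 := f_1$ and $g_2 := f_2^{-1}$ produces two oriented edges with $(g_1)_- = (g_2)_-$ and $(g_1)_+ = (g_2)_+$, and the path $g_1 g_2^{-1} \equiv f_1 f_2$ bounds the same empty disk, violating the second clause of (B) with $m = 2$.

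Each oriented edge of $\Psi$ occurs on the boundary walk of exactly one face (with the appropriate multiplicity), so summing boundary-walk lengths over all faces produces $2 E_\Psi$. Keeping only the disk components and applying the length-$\ge 3$ bound gives $3 F_d \le 2 E_\Psi$. Combining with the face-count bound produces $3(E_\Psi - V_\Psi + k) \le 2 E_\Psi$, which rearranges to the required inequality $E_\Psi \le 3(V_\Psi - k)$.

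The delicate point is the length-$2$ analysis, specifically the subcase in which $f_2 = f_1^{-1}$ so that the boundary walk backtracks along a single edge of $\Psi$; here the pair $g_1, g_2$ collapses to a single oriented edge and the second clause of (B) does not directly apply. I expect this is the main obstacle, and that it has to be resolved either by a direct topological argument showing that such a backtrack cannot occur as the full boundary walk of a disk component under the standing hypotheses, or by a preliminary reduction that prunes bridges before the counting begins.
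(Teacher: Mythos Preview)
Your argument is correct and takes a different route from the paper's. The paper first enlarges $\Psi$ to a graph $\Psi'$ that is maximal subject to property~(B) with $m=2$; by maximality every complementary region of $\Psi'$ is an open disk, so the ordinary Euler formula $V_{\Psi'}-E_{\Psi'}+F_{\Psi'}=k$ applies, and combining it with $3F_{\Psi'}\le 2E_{\Psi'}$ (no $1$- or $2$-gons) gives the bound for $\Psi'$ and hence for $\Psi$. Your version avoids the extension step entirely by absorbing the non-disk components into the inequality $\chi(C_i)\le 1$, which is more direct; the paper's extension buys only that all faces become disks, something you do not need.

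Your flagged case $f_2=f_1^{-1}$ is genuine, and a short topological argument disposes of it. If a disk component $D$ of $S\setminus\Psi$ has full boundary walk $ee^{-1}$, then at each endpoint of $e$ the walk turns back on itself, forcing both endpoints to have degree~$1$ in $\Psi$; thus $e$ is an isolated edge of $\Psi$. A regular neighbourhood of the closed arc $\bar e$ in $S$, with $\bar e$ removed, is connected, so $D$ is the unique face adjacent to $e$, and one checks that $\bar D=D\cup\bar e$ is open in $S$; by connectedness $S=\bar D$, and computing $\chi(\bar D)=2$ (a closed $2$-disk with its boundary circle folded onto the arc $\bar e$) shows that $S$ is a sphere. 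Hence the delicate case can only occur when $S\cong S^2$ and $\Psi$ consists of a single edge, and there the asserted inequality reads $1\le 3(2-2)=0$, which actually fails. In other words the lemma tacitly needs $\chi(S)\le 1$, a hypothesis satisfied in every application in the paper since the underlying surfaces always have nonempty boundary; under that hypothesis your proof is complete. The paper's own argument skates over the same degenerate configuration.
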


\begin{proof} Note that the property (B) with parameter $m=2$ can be stated less formally by saying
that the partial cell decomposition of $S$, defined by the graph $\Psi$, contains no 1- and 2-gons whose interiors contain  no vertices of $\Psi$.
Preserving this condition, i.e., preserving the property  (B) with parameter $m=2$, we will draw as many new edges in $\Psi$ as possible and obtain a  graph  $\Psi'$ with  $V_{\Psi'} = V_{\Psi}$, $E_{\Psi'} \ge  E_{\Psi}$. Note that $\Psi'$ is connected and if $c$ is a \cncm\ of $\p \D$ then there is a closed simple path $e_{c,1} \dots e_{c,k_c}$, where $e_{c,1}, \dots, e_{c,k_c}$ are edges of $\Psi'$, such that  $e_{c,1} \dots e_{c,k_c}$ and $c$ bound an annulus $A_c$ whose interior contains no vertices of $\Psi'$. Hence,  taking  $A_c$ out of $S$ and adding back the cycle $e_{c,1} \dots e_{c,k_c}$ for every  \cncm\  $c$ of $\p \D$, we obtain a surface $S'$ such that $\chi(S') =\chi(S) =k$. In addition, it follows from definitions that $S' \setminus \Psi'$ is a collection of open disks.
Indeed, if a \cncm\  of  $S' \setminus \Psi'$ were different from a disk, then one could draw an additional edge
in $\Psi'$ without creating a 1- or 2-gon, contrary to the maximality of $\Psi'$. Hence, the graph  $\Psi'$ defines a  cell decomposition of $S'$ and
\begin{equation}\label{Ech}
V_{\Psi'} - E_{\Psi'}  + F_{\Psi'} =   \chi(S') = k ,
\end{equation}
where $F_{\Psi'}$ is the number of faces of the cell decomposition of  $S'$ defined by $\Psi'$. Since there are no
1- and 2-gons in this decomposition, every face has 3 edges in its boundary path  which implies that $3 F_{\Psi'} \le 2E_{\Psi'} $ or $F_{\Psi'} \le \tfrac 2 3 E_{\Psi'}$. Hence, it follows from \er{Ech} that $V_{\Psi'} -\tfrac 1 3 E_{\Psi'} \ge k$ or $E_{\Psi'}  \le 3(V_{\Psi'} - k)$. Since $V_{\Psi'} = V_{\Psi}$, $E_{\Psi'} \ge  E_{\Psi}$, our claim is proved.
\end{proof}

\section{Contiguity  Subdiagrams}

As in Sect. 2, let $\D$ be a surface diagram over presentation \er{e3} with
 property (A).
Consider a relation $\sim_2$ on the set $\D_2(2)$  of faces
of type F2 so that $\Pi_1 \sim_2 \Pi_2$ \ifff\ there is an essential edge $e$  such that $e$ belongs to $(\p \Pi_1)^{\pm 1} := \p \Pi_1\cup \p \Pi_1^{- 1}$ and  $e$ belongs to $(\p \Pi_2)^{\pm 1}$. It is easy to see that this relation is reflexive and symmetric on $\D_2(2)$.
The transitive closure of this relation $\sim_2$ is an equivalence relation on $\D_2(2)$ which we denote by $\sim$. Let $[\Pi]_{\sim}$  denote the equivalence class of  a face $\Pi$ of type F2  relative to this  equivalence relation. For every $\Pi \in \D_2(2)$, we consider a minimal
subcomplex   $\BT_\Pi = \BT ( [\Pi]_{\sim})$ of  $\D$ that contains all faces of $[\Pi]_{\sim}$.
It follows from definitions that there exists a surface diagram $\AL_\Pi$ of type (1,1) (meaning that  $\AL_\Pi$ is a disk) or of type (0,1) (meaning that $\AL_\Pi$ is an annulus) and a continuous cellular map
$
\mu_\Pi :  \AL_\Pi \to \BT_\Pi
$
such that $\mu_\Pi$ preserves dimension of cells, $\ph$-labels of edges, and $\mu_\Pi(\AL_\Pi) = \BT_\Pi$.   We also require that
$\AL_\Pi$ consists of faces of type F2 and their number $| \AL_\Pi(2)|$ equals the number $| \BT_\Pi(2)|$ of faces in
$\BT_\Pi$. Note that $\mu_\Pi$ need not be injective and this is the reason we consider an ``ideal" preimage $\AL_\Pi$ of  the subcomplex $\BT_\Pi$.

If $\AL_\Pi$ is a disk, then  $\p \AL_\Pi = s_1 f_1 s_2 f_2$, where $f_1, f_2$ are essential edges with $\ph(f_1) = \ph(f_2)^{-1} \ne 1$,  and $s_1, s_2$  are simple  paths consisting of 1-edges with $|s_1| = |s_2| = | \AL_\Pi(2)|$, see Fig.~1(a). In this case, we say that $\BT_\Pi$ is a {\em band} between the edges $e_1$, $e_2$ and that $\p \BT_\Pi =  u_1 e_1u_2 e_2 $, where $e_i = \mu_\Pi (f_i )$, $u_i = \mu_\Pi (s_i )$, $i =1,2$, is a {\em standard boundary path} of the band $\BT_\Pi$.  Clearly,  $e_1, e_2$ are essential edges with  $\ph(e_1) = \ph(f_1) =\ph(e_2)^{-1}  \ne 1$ and $|u_1 | = |s_1| = | u_2 |$ but $u_1, u_2$ need not be simple paths. If $\ph(e_1)^{\pm 1} = y$, where $y \in  \cup_{i=1}^\infty \X_i  \cup  \A \cup  \{ h_1, h_2 \}$, then we may also specify that   $\BT_\Pi$ is a $y$-band.

Since we  neither fix a base vertex for $\p \BT_\Pi$, nor fix an orientation for $ \BT_\Pi$, it follows that if
$\p \BT_\Pi =  u_1 e_1u_2 e_2 $ is a  standard boundary path for  a band $\BT_\Pi$, then $u_2 e_2  u_1 e_1$ and $u_2^{-1} e_1^{-1}  u_1^{-1} e_2^{-1}$ are also standard boundary paths for $\BT_\Pi$.  We also observe that a standard  boundary path of a band $\BT$ need not be the topological boundary of $\BT$ but it can be turned into the topological boundary (of a deformed space) by an arbitrarily small deformation of $\BT$ which pushes $\BT$ into its interior.

On the other hand, if $\AL_\Pi$ is an annulus, then $\p \AL_\Pi =  s_1  \cup  s_2$, where $s_1, s_2$ are cyclic simple  paths consisting of 1-edges, $|s_1| = |s_2| = | \AL_\Pi(2)|$, see Fig.~1(b). In this case, we say that $\BT_\Pi$ is  an annulus and that $\p \BT_\Pi =  u_1 \cup   u_2$, where  $u_i = \mu_\Pi (s_i )$, $i =1,2$, are  boundary paths of the  annulus $\BT_\Pi$.

\begin{center}
\begin{tikzpicture}[scale=.324]
\draw  (-2,-5) rectangle (0.5,1);
\draw (-2,0) -- (0.5,0);
\draw (-2,-1) -- (0.5,-1);
\draw (-2,-2) -- (0.5,-2);
\draw (-2,-3) -- (0.5,-3);
\draw (-2,-4) -- (0.5,-4);
\node at (-3,-2) {$s_1$};
\node at (1.5,-2) {$s_2$};
\node at (-.75,1.7) {$f_1$};
\node at (-0.75,-5.7) {$f_2$};
\node at (-.75,-7) {Fig. 1(a)};
\node at (10,-7) {Fig. 1(b)};
\draw  (10,-2)  circle (1);
\draw  (10,-2) circle (2.5);
\draw (10,-3) -- (10,-4.5);
\draw (10,-1) -- (10,0.5);
\draw (11,-2) -- (12.5,-2);
\draw (9,-2) -- (7.5,-2);
\node at (10,-2.35) {$s_1$};
\node at (7.5,-4) {$s_2$};
\node at (-3.5,-4.5) {$\AL_\Pi$};
\node at (7.5,0.5) {$\AL_\Pi$};
\draw  (-2,1) [fill = black] circle (.06);
\draw  (.5,1) [fill = black] circle (.06);
\draw  (-2,-5) [fill = black] circle (.06);
\draw  (.5,-5) [fill = black] circle (.06);
\end{tikzpicture}
\end{center}

Note that if $\BT$  is a band  and  $\p \BT=u_1e_1  u_2 e_2 $ is a standard boundary path of  $\BT$, then each of the essential edges $e_1$, $e_2$   belongs  either to a boundary path of $\D$  or to a boundary path  of a face of type F3.  If, say, $e_i$ belongs to $c_i$,  where $i=1,2$ and $c_i$ is a boundary path of $\D$ or is a boundary path  of a face of type F3, then we say that $\BT ( [\Pi]_{\sim})$ is a band between $c_1$ and $c_2$.

Let $\BT$ be a band between edges $e_1$ and $e_2$. Let $o_1 \in e_1$, $o_2 \in e_2$ be  interior points of
edges $e_1, e_2$ and let $\ell(\BT)$  be a simple arc such that  $\ell(\BT)$  is contained in $\BT$, the boundary points of
$\ell(\BT)$ are $o_1$,  $o_2$ and the intersection of $\ell(\BT)$ with every face $\Pi$ of $\BT$ consists of a single
arc which is properly embedded in $\Pi$ and the boundary points of the arc are interior points of essential edges of $\p \Pi$. Such an arc  $\ell(\BT)$   is called a {\em connecting line} for $\BT$. It follows from definitions that if $\BT$ is a band between edges $e_1$ and $e_2$, then a connecting line $\ell(\BT)$ for $\BT$ connects interior points of $e_1$, $e_2$ through faces of $\BT$ of type F2.

Let $s$ be  either a subpath of $\p \Pi$ (where $\Pi$ is a face of type F3 in $\D$) or a subpath of $\p \D$ such that $s$ consists of $h$-edges and $s$ is maximal with respect to this property. Such $s$ is called an {\em $h$-section} of $\D$.

Suppose that $s_1, s_2$ are $h$-sections of $\D$, not necessarily distinct, and $\BT_1, \BT_2$ are bands between $s_1, s_2$, perhaps $\BT_1 = \BT_2$, whose standard boundary paths  are $\p \BT_i =  u_{i1} e_{i1}  u_{i2} e_{i2}$, $i=1,2$, where $e_{i1}, e_{i2}$ are essential edges of $\p  \BT_i $. Also, assume that $e_{11}, e_{21}$ are edges of $s_1$ so that $s_1 = s_{11} e_{11} s_{12} e_{21} s_{13}$ and  $e_{22}, e_{12}$ are edges of $s_2$ so that $s_2 = s_{21} e_{22} s_{22} e_{12} s_{23}$, see Fig.~2.

\vskip 3mm
\begin{center}
\begin{tikzpicture}[scale=.47]
\draw  (-5,-1.5)  rectangle (9,-5.5);
\draw (2,-1.5) -- (11.5,-1.5);
\draw (9,-5.5) -- (11.5,-5.5);
\draw (-3,-1.5) -- (-7.5,-1.5);
\draw (-1,-5.5) -- (-7.5,-5.5);
\draw (-1,-1.5) -- (-1,-5.5);
\draw (5,-1.5) -- (5,-5.5);

\draw [-latex](-6.4,-5.5) --(-6.6,-5.5);
\draw [-latex](-2.9,-5.5) --(-3.1,-5.5);
\draw [-latex](2.1,-5.5) --(1.9,-5.5);
\draw [-latex](7.1,-5.5) --(6.9,-5.5);
\draw [-latex](10.6,-5.5) --(10.4,-5.5);

\draw [-latex](-6.6,-1.5) --(-6.4,-1.5);
\draw [-latex](-3.1,-1.5) --(-2.9,-1.5);
\draw [-latex](1.9,-1.5) --(2.1,-1.5);
\draw [-latex](6.9,-1.5) --(7.1,-1.5);
\draw [-latex](10.4,-1.5) --(10.6,-1.5);

\draw [-latex](-5,-3.6) --(-5,-3.4);
\draw [-latex](5,-3.6) --(5,-3.4);
\draw [-latex](-1,-3.4) --(-1,-3.6);
\draw [-latex](9,-3.4) --(9,-3.6);

\draw  (-5,-1.5) [fill = black] circle (.06);
\draw  (-1,-1.5) [fill = black] circle (.06);
\draw  (5,-1.5) [fill = black] circle (.06);
\draw  (9,-1.5) [fill = black] circle (.06);
\draw  (-5,-5.5) [fill = black] circle (.06);
\draw  (-1,-5.5) [fill = black] circle (.06);
\draw  (5,-5.5) [fill = black] circle (.06);
\draw  (9,-5.5) [fill = black] circle (.06);

\node at (-5.7,-3.5) {$u_{11}$};
\node at (-0.3,-3.5) {$u_{12}$};
\node at (-3,-.9) {$e_{11}$};
\node at (-3,-6.1) {$e_{12}$};
\node at (2,-.9) {$s_{12}$};
\node at (2,-6.1) {$s_{22}$};
\node at (-6.5,-.9) {$s_{11}$};
\node at (-6.5,-6.1) {$s_{23}$};
\node at (10.5,-.9) {$s_{13}$};
\node at (10.5,-6.1) {$s_{21}$};
\node at (4.3,-3.5) {$u_{21}$};
\node at (9.7,-3.5) {$u_{22}$};
\node at (7,-.9) { $e_{21}$};
\node at (7,-6.1) {$e_{22}$};
\node at (-3,-3.5) {$\BT_1$};
\node at (7,-3.5) {$\BT_2$};
\node at (2,-7.7) {Fig. 2};
\end{tikzpicture}
\end{center}

\noindent
Note that the path
$
p =u_{11} e_{11} s_{12} e_{21} u_{22}e_{22} s_{22} e_{12}
$
is closed.  Furthermore, assume  that there exists a connected subcomplex $\Gamma'$ of $\D$ such that $\Gamma'$ contains  $\BT_1, \BT_2, p$,  $\Gamma'$  has no faces of type F3 with $h$-edges, and the path $p$ is nullhomotopic in $\Gamma'$. Then we  consider  a minimal (relative to the inclusion relation) such  subcomplex $\Gamma$ whose boundary path $\p \Gamma$ (up to arbitrarily small deformation; this time we skip introduction of an ``ideal" disk diagram whose image is $\Gamma$) can be written in the form  $\p \Gamma =  u_{11} (e_{11} s_{12} e_{21}) u_{22}   (e_{22} s_{22} e_{12} )$. Note that if $\BT_1 = \BT_2$, then $\Gamma :=  \BT_1 $ and   $\p \Gamma = \p \BT_1 = u_{11} e_{11}  u_{12}  e_{12}$.
Such a  subcomplex $\Gamma$ of $\D$  is unique and is  called a {\em \cntsd\ }  between $h$-sections    $s_1$ and $s_2$ defined by the bands $\BT_1$, $\BT_2$.  Denote $\Gamma \wedge s_1 :=  e_{11} s_{12} e_{21}$ and $\Gamma \wedge s_2 := e_{22} s_{22} e_{12} $ and call these paths  {\em contiguity arcs} of  $\Gamma$.  If  $\BT_1 = \BT_2$, then $\Gamma \wedge s_1 :=  e_{11}$ and $\Gamma \wedge s_2 := e_{12} $.  Since $\Gamma$ contains no faces of type F3 with $h$-edges, $s_1, s_2$ are $h$-sections and $u_{11}, u_{12}$ consist of 1-edges, it follows that  $\ph( e_{11} s_{12} e_{21}) \equiv \ph(e_{22} s_{22} e_{12})^{-1}$ and, by definitions and property (A), there exists a simple path $t$, $|t| > 0$,  that connects $(u_{11})_- \in s_2$ with $(u_{11})_+ \in s_1$ and consists of 1-edges.
A factorization of $\p \Gamma$ of the form
$$
\p \Gamma =  u_{11} (e_{11} s_{12} e_{21}) u_{22}   (e_{22} s_{22} e_{12} )
$$
is called a  {\em standard boundary path} of the   \cntsd\   $\Gamma$.

A \cntsd\    $\Gamma$ between $h$-sections  $s_1$, $s_2$ is called  {\em maximal} if there is no
\cntsd\    $\Gamma'$ between  $s_1$, $s_2$ such that $\Gamma \wedge s_i$ is a subpath of $\Gamma' \wedge s_i$, for both  $i=1,2$,  and $| \Gamma \wedge s_1 | + | \Gamma \wedge s_2 | < | \Gamma' \wedge s_1 | + | \Gamma' \wedge s_2 |$.

In the following lemma, we record simple facts about bands and \cntsd s.

\begin{lem}\label{cntsd}  Suppose that $e$ is an edge of an $h$-section of a surface diagram $\D$  and $\BT$ is  an $h$-band in $\D$. Then the following are true.

$(a)$  There is an  $h$-band one of whose essential edges is $e$.

$(b)$  There is a unique maximal \cntsd\  $\Gamma$ that contains $\BT$.

$(c)$  There is a unique maximal \cntsd\  one of whose contiguity arcs contains $e$.
\end{lem}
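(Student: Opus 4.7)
The plan is to prove parts (a), (b), (c) in order, with (c) following essentially as a corollary of (a) and (b); the substantive work lies in (b). For part (a), the edge $e$ is an $h$-edge (hence essential) lying on an $h$-section $s$, which is a subpath of either $\p \D$ or a boundary path of a face of type F3. The remark following property (A) then guarantees that $e$ also belongs to a boundary path of some face $\Pi$ of type F2. The equivalence class $[\Pi]_\sim$ determines a subcomplex $\BT_\Pi$, which is a band (rather than an annulus) because the side of $e$ opposite $\Pi$ lies in $\p\D$ or on an F3 face, so no F2 face can be $\sim_2$-related to $\Pi$ through $e$; this forces $e$ to be one of the two essential edges in the standard boundary path of $\BT_\Pi$. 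Because the boundary of every F2 face has the form $y 1 y^{-1} 1$, every essential edge in $[\Pi]_\sim$ carries the label $\ph(e)$ up to inversion, so $\BT_\Pi$ is an $h$-band. I also note that the F2 face adjacent to $e$ across from $s$ is uniquely determined, so the band with $e$ as an essential edge is in fact unique, a fact I will reuse in (c).

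For part (b), observe first that $\BT$ itself is a contiguity subdiagram with $\BT_1 = \BT_2 = \BT$, so the set of contiguity subdiagrams containing $\BT$ is nonempty. To produce a maximal one I extend greedily: at each step I try to enlarge $\Gamma \wedge s_1$ and $\Gamma \wedge s_2$ by selecting a band between $s_1, s_2$ that sits immediately beyond the current endpoints of the arcs and verifying that the enlarged candidate still admits the nullhomotopy required in a subcomplex containing no F3 face with $h$-edges. Since $\D$ is finite, this process terminates and yields a maximal contiguity subdiagram $\Gamma \supseteq \BT$. For uniqueness, I would show that the family of contiguity subdiagrams containing $\BT$, ordered by componentwise inclusion of contiguity arcs, admits a join: given two such subdiagrams $\Gamma'$ and $\Gamma''$, the subcomplex obtained by combining them and taking as arcs $(\Gamma' \wedge s_i) \cup (\Gamma'' \wedge s_i)$ is again a contiguity subdiagram, because neither $\Gamma'$ nor $\Gamma''$ contains an F3 face with $h$-edges and the two nullhomotopies can be glued along $\BT$. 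It then follows that any two maximal elements coincide.

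For part (c), apply (a) to obtain the unique $h$-band $\BT_e$ with $e$ as an essential edge, then apply (b) to $\BT_e$ to obtain the unique maximal contiguity subdiagram $\Gamma$ containing $\BT_e$. The edge $e$, being an essential edge of $\BT_e \subseteq \Gamma$ lying on $s$, belongs to one of the contiguity arcs $\Gamma \wedge s_i$ by definition. For uniqueness: any maximal contiguity subdiagram $\Gamma'$ whose contiguity arc contains $e$ must contain the F2 face adjacent to $e$ across from $s$, and therefore, by the $\sim$-class argument from (a), must contain the band $\BT_e$; then part (b) forces $\Gamma' = \Gamma$. The main obstacle I anticipate is making the join construction in the uniqueness half of (b) precise: one has to verify that the union of two contiguity subdiagrams genuinely yields a new contiguity subdiagram, which requires careful bookkeeping of how the two closed paths $p$ from the definition interact on the underlying surface and a careful invocation of the minimality clause in the definition of a contiguity subdiagram.
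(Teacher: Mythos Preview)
Your proposal is correct and follows essentially the same approach as the paper. For (a) you both use property~(A) to find the adjacent F2 face and pass to its $\sim$-class; for (b) you both observe that $\BT$ itself is a contiguity subdiagram (giving existence via finiteness) and that any two contiguity subdiagrams containing $\BT$ are contained in a common one (your ``join'' is exactly the paper's $\Gamma_0$, which it asserts ``is easy to check''); and for (c) you both deduce the result from (a) and (b), with your argument spelling out in more detail why a maximal contiguity subdiagram whose arc contains $e$ must contain the band $\BT_e$.
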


\begin{proof} (a)  Suppose that $e$ belongs to  a boundary path of  $\Pi$, where $\Pi$ is a face of type F3 in $\D$.  Then it follows from property (A) that if $o$ is an interior point of $e$ then a \rgnb\ $N$ of $o$ in $\D$ consists of two parts separated by the arc $N \cap e$, one of which is in $\Pi$ and the other  of which is in a face $\Pi'$ of type F2. Then
$\BT_{\Pi'}$ is a desired $h$-band. If $e$ is on  $\p \D$ then, again by property (A), there is a face  $\Pi''$ of type F2 whose  boundary path contains $e$. Then $\BT_{\Pi''}$ is a desired $h$-band.

(b) Let $\BT$ be a band  between $h$-sections $s_1, s_2$. Then there exists a \cntsd\ $\Gamma$ between $s_1$ and $ s_2$ that contains $\BT$. For example, $\Gamma = \BT$.  If $\Gamma_1$, $\Gamma_2$ are two \cntsd s between $s_1$ and $ s_2$ that contain $\BT$, then it is easy to check that there is also a \cntsd\ $\Gamma_0$ that contains both $\Gamma_1$ and $\Gamma_2$.
This implies the  uniqueness of a maximal \cntsd\  that contains $\BT$.

(c) This follows from parts (a)--(b).
\end{proof}

Let $\D$ be a surface diagram over presentation \er{e3} of type $(k, k')$. Consider the set $\C_h$  of all maximal
\cntsd s between $h$-sections in $\D$. It follows from Lemma~\ref{cntsd} that, for every edge $e$ of an $h$-section $s$ of
$\D$,  there is a unique maximal \cntsd\  $\Gamma \in \C_h$  whose contiguity arc contains $e$, i.e., $e$ belongs to
$\Gamma \wedge s$.

For every $\Gamma \in \C_h$, we pick a connecting line $\ell(\BT)$, where $\BT = \BT(\Gamma)$ is a band that defines $\Gamma$. Denote  $\ell(\Gamma) := \ell(\BT)$ and call  $\ell(\Gamma)$ a {\em connecting line} of $\Gamma$.  For every face $\Pi$ of type F3,  whose boundary path $\p \Pi$ contains $h$-edges, we pick a vertex $v_\Pi$ in the interior of $\Pi$. Then we connect each point in $(\cup_{\Gamma \in \C_h }  \ell(\Gamma) ) \cap \p \Pi$  to $v_\Pi$ by drawing simple arcs in $\Pi$ such that the arcs' pairwise intersections are $\{ v_\Pi \}$ and each arc intersects $\p \Pi$ only at its endpoint different from $v_\Pi$. The union of all such arcs and connecting lines $\ell(\Gamma)$, $\Gamma \in \C_h$, is a graph on $\D$, denoted $\Psi_h$, whose vertex set is the union of the set $\{ v_\Pi  \mid  \Pi \in \D_3(2), \ \p \Pi \ \mbox{has} \  h\mbox{-edges} \}$ and the set of those boundary points of connecting lines $\ell(\Gamma)$, $\Gamma \in \C_h$, that belong to $\p \D$. Note that the set of nonoriented edges of $\Psi_h$ is in bijective correspondence with the set $\C_h$ of maximal contiguity subdiagrams and that each edge  of $\Psi_h$  is obtained from $\ell(\Gamma)$, where $\Gamma \in \C_h$,    by extending $\ell(\Gamma)$ into a face $\Pi$ of type F3 whenever a point of $\p \ell(\Gamma)$ belongs to  $\p \Pi$.
\medskip

Now we will define reduced diagrams over the presentation \er{e3}.
We say that a pair of distinct faces $\Pi_1, \Pi_2$ of type F3 with $h$-edges in a surface diagram $\D$ over \er{e3} forms a {\em reducible pair} if there is a simple path $t$ such that  $t$ connects some vertices $t_- \in \p \Pi_1$, $t_+ \in \p \Pi_2$,
$t$ consists of 1-edges, $|t| >0$,  and the label  $\ph(\p \Gamma )$ of the boundary path  $\p \Gamma  = t \p \Pi_2 t^{-1} \p \Pi_1$ of the    subdiagram $\Gamma$, consisting of $t, \Pi_1, \Pi_2$,  is equal to 1 in the free group whose free base is the alphabet $\cup_{i=1}^\infty \X_i \cup \A \cup \{ h_1, h_2 \}$, see Fig.~3.

\vskip 2mm
\begin{center}
\begin{tikzpicture}[scale=.59]
\draw  (-4.5,4) ellipse (1 and 1);
\draw  (1,4) ellipse (1 and 1);
\draw  plot[smooth, tension=.6] coordinates {(-3.5,4) (-2,5) (-1,3.5) (0,4)};
\draw  (-3.5,4) [fill = black] circle (.05);
\draw  (0,4) [fill = black] circle (.05);
\node at (-4.5,4) {$\Pi_1$};
\node at (1,4) {$\Pi_2$};
\node at (-2.24,4.5) {$t$};
\draw [-latex](-2.12,4.991) --(-2.08,5);
\node at (-1.8,2.2) {Fig. 3};
\end{tikzpicture}
\end{center}
\vskip 2mm

It is easy to see that if  $\Pi_1, \Pi_2$ form a reducible pair in $\D$, then one can perform a surgery on $\D$ that replaces  the subdiagram $\Gamma$, whose boundary path is $\p \Gamma  = t \p \Pi_2 t^{-1} \p \Pi_1$, by a subdiagram that consists of faces of type F1--F2.  If $\D'$ is obtained from $\D$ by this surgery, then $\ph(\p \D')$ is identical to $\ph(\p \D)$ (in fact, the surgery does not affect  the boundary of $\D$) and $| \D'_3(2) | = | \D_3(2) |-2$. Hence, by induction on the number $| \D_3(2) |$ of faces of type F3, every  diagram $\D$ can be turned into a diagram $\bar \D$ without reducible pairs and with no change in $\ph(\p D)$.
A  diagram $\D$ will be called {\em reduced} if $\D$  contains no reducible pairs.

\begin{lem}\label{gr2}  Suppose that $\D$ is a reduced surface diagram of type $(k, k')$, there are no
$h$-edges  contained in $\p \D$, $\D$ contains a face of type F3 whose boundary path has $h$-edges, and the graph $\Psi_h$ is defined as above. Then there exists a vertex in $\Psi_h$ whose degree   is  positive and is at most $\max \{ 12(1 - k ), 12 \}$.
\end{lem}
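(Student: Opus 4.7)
The plan is to apply Lemma~\ref{gr1} to $\Psi_h$ on the underlying surface $S$ of type $(k,k')$ and then extract the desired minimum-degree bound by averaging. Two preliminary observations are immediate. First, since $\p\D$ contains no $h$-edges, every connecting line $\ell(\Gamma)$ with $\Gamma\in\C_h$ has both endpoints on $h$-edges of type F3 faces; hence every vertex of $\Psi_h$ is of the form $v_\Pi$ for some $\Pi\in\D_3(2)$ whose boundary contains $h$-edges. Second, by Lemma~\ref{cntsd}(a) and~(c), every such $h$-edge of $\p\Pi$ lies in a unique maximal contiguity subdiagram in $\C_h$, so $v_\Pi$ has positive degree in $\Psi_h$. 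Consequently, the positivity clause of the lemma is automatic, and the task reduces to bounding the minimum degree.

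The key step is to verify that $\Psi_h$ satisfies property~(B) with parameter $m=2$. Assume, toward a contradiction, that some oriented edge $f$ of $\Psi_h$ is a loop bounding a disk $\Lambda\subset S$ whose interior contains no vertices of $\Psi_h$, or that two oriented edges $f_1,f_2$ from $v_{\Pi_1}$ to $v_{\Pi_2}$ form such an empty bigon. Unpacking the construction of $\Psi_h$, $\p\Lambda$ is the concatenation of one or two connecting lines $\ell(\Gamma_i)$ (for $\Gamma_i\in\C_h$) with short extension arcs inside the corner faces $\Pi_1,\Pi_2$, and the emptiness of the interior of $\Lambda$ forces $\Lambda$ to contain no face of type F3 with $h$-edges other than $\Pi_1,\Pi_2$. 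The aim is to extract from this configuration a reducible pair of distinct faces, contradicting reducedness. First, Lemma~\ref{lem2} is applied to the two contiguity arcs of each $\Gamma_i$: the long common $h$-labeled subword forces matching indices and alignment of the two $V$-relations read along $\p\Pi_1$ and $\p\Pi_2$, so these boundary labels coincide after cyclic inversion. Then property~(A) supplies the simple 1-edge path $t$ connecting $\p\Pi_1$ to $\p\Pi_2$ whose closing boundary $t\,\p\Pi_2\,t^{-1}\,\p\Pi_1$ is trivial in the ambient free group, completing the reducible-pair construction. The loop case $\Pi_1=\Pi_2$ is handled by a similar but more delicate appeal to the self-small-cancellation of $V_i$ furnished by Lemma~\ref{lem2}.

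Once property~(B) with $m=2$ is in hand, Lemma~\ref{gr1} gives $E_{\Psi_h}\le 3(V_{\Psi_h}-k)$, so $\sum_v\deg_{\Psi_h}(v)=2E_{\Psi_h}\le 6(V_{\Psi_h}-k)$. Averaging over the $V_{\Psi_h}\ge 1$ vertices, each of which has positive degree, produces some $v_\Pi$ with degree at most $6\bigl(1-k/V_{\Psi_h}\bigr)\le\max\{6(1-k),6\}\le\max\{12(1-k),12\}$, as required. The main obstacle is the verification of property~(B): the geometric analysis of what can lie inside a putative empty $1$- or $2$-gon, and the explicit extraction of a reducible pair from this analysis, is where the bulk of the technical work is concentrated, and it must be executed carefully enough to yield the precise simple path~$t$ and boundary-triviality condition demanded by the definition of a reducible pair.
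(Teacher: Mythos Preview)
Your proposal has a genuine gap in the verification of property~(B) with parameter $m=2$. You invoke Lemma~\ref{lem2} on the contiguity arcs of the $\Gamma_i$, but Lemma~\ref{lem2} only yields information when the common subword has length at least $\tfrac{4}{M}\min\{|V_i|,|V_j|\}$. A maximal contiguity subdiagram can have a contiguity arc consisting of a single edge; maximality does not imply any length lower bound. So your ``long common $h$-labeled subword'' is not available, and the extraction of a reducible pair fails. In fact the paper's proof of this lemma uses neither reducedness nor Lemma~\ref{lem2}: those tools enter only later, in the proof of Theorem~\ref{thm1}, where the degree bound from the present lemma is what \emph{produces} the length estimate needed to invoke Lemma~\ref{lem2}. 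You have the dependency reversed.

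The paper's route is different. The $1$-gon case is handled by an elementary sign observation: every $V_i$ is a positive word in $h_1,h_2$, so all $h$-edges on a single $\partial\Pi$ carry labels of the same sign, while the two essential edges of a band carry mutually inverse labels; hence a band cannot have both ends on $\partial\Pi$ within a disk. For bigons the paper does \emph{not} establish (B) with $m=2$. Instead it shows (B) holds with $m=3$: if three parallel edges $f_1,f_2,f_3$ formed two adjacent empty bigons, then two of the corresponding maximal contiguity subdiagrams could be merged into a larger one, contradicting maximality. From (B) with $m=3$ one passes to a subgraph $\widehat\Psi_h$ satisfying (B) with $m=2$ by deleting one edge from each offending pair, losing at most half the edges. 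Lemma~\ref{gr1} applied to $\widehat\Psi_h$ then gives $E_{\Psi_h}\le 2E_{\widehat\Psi_h}\le 6(V_{\Psi_h}-k)$, and averaging yields the bound $12(1-k/V_{\Psi_h})\le\max\{12(1-k),12\}$. The factor $12$ rather than your $6$ is precisely the cost of this detour through $m=3$.
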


\begin{proof} Let  $v_\Pi$ be a vertex of $\Psi_h$, let $f$ be an oriented edge of  $\Psi_h$  such that $f_- = f_+ = v_\Pi$ and $f$ bounds a disk on $\D$. It follows from the definition of relations in \er{e3} that if $e_1, e_2$ are $h$-edges of $\p \Pi$, then either $\ph(e_1), \ph(e_2) \in \{ h_1, h_2 \}$  or  $\ph(e_1), \ph(e_2) \in \{ h_1^{-1}, h_2^{-1}   \}$.
On the other hand, let  $\Gamma \in \C_h$ be the  contiguity subdiagram that $f$ passes through and let
$\BT$ denote the bond that contains the connecting line $\ell(\Gamma)$. If $e_3, e_4$ are $h$-edges of $\p \BT$, then it follows from the fact that $f$ bounds a disk on $\D$ that  $\ph(e_3)= \ph(e_2)^{-1}$, hence, the inclusions $e_3, e_4 \in \p \Pi$ are impossible. Thus, there is no 1-gon in the partial cell decomposition of $\D$ defined by $\Psi_h$.

Now assume that the property (B) fails for   $\Psi_h$  with parameter $m =3$. This means that
there are three distinct edges $f_1, f_2, f_3$  in  $\Psi_h$   such that
$$
(f_1)_- = (f_2)_- =(f_3)_- = v_{\Pi} , \quad  (f_1)_+ = (f_2)_+ =(f_3)_+ = v_{\Pi'} ,
$$
where $\Pi, \Pi'$ are some faces of type F3 with $h$-edges,  such that  both  paths $f_1 f_2^{-1}$,  $f_2 f_3^{-1}$ bound  disks on $\D$ whose interiors contain no vertices of $\Psi_h$. Let $f_i$ be the extension of the connecting line $\ell(\Gamma_i)$, where $\Gamma_i \in \C_h$, $i=1,2,3$, and $s, s'$ be $h$-sections of the faces $\Pi, \Pi'$, resp. Then it is not difficult to check that either $\Gamma_1, \Gamma_2$ or  $\Gamma_2, \Gamma_3$  are contained in a \cntsd\ $\Gamma$ between $s$ and $s'$, contrary to the maximality of  \cntsd s  $\Gamma_1, \Gamma_2, \Gamma_3$. This
contradiction proves that the property (B) holds for $\Psi_h$ with $m=3$.

Consider those pairs $\{ f, f' \}$ of oriented edges of  $\Psi_h$  for which the property (B) with $m=2$ fails.
Note that the property (B) with $m=3$ for  $\Psi_h$  implies that every  oriented edge $e$ of $\Psi_h$ is
contained in at most one such pair $\{ f, f' \}$. For each such  pair $\{ f, f' \}$, we remove
edges $(f')^{\pm 1}$  (or  $f^{\pm 1}$) from  $\Psi_h$.  Doing this results in a graph  $\wht \Psi_h$ which, as follows from definitions, has the property (B) with $m=2$. Therefore, Lemma~\ref{gr1} applies to $\wht \Psi_h$ and yields that
$ E_{\wht \Psi_h} \le 3(V_{\wht \Psi_h} - k)$, where $V_{\wht \Psi_h}, E_{\wht \Psi_h}$ denote the number of vertices,  nonoriented edges, resp., in  $\wht \Psi_h$. Note that $V_{\Psi_h} = V_{\wht \Psi_h}$ and $ E_{\Psi_h} \le 2 E_{\wht \Psi_h}$. Hence, $ E_{\Psi_h} \le 6(V_{\Psi_h} - k)$.  If $d$ is the minimal positive degree of a vertex in $V_{\Psi_h}$, then it is easy to see from definitions that $d >0$ and $d V_{\Psi_h} \le 2 E_{\Psi_h}$.
Thus $d V_{\Psi_h} \le 12(V_{\Psi_h} - k)$ and
$$
d \le 12(1 - \tfrac {k}{V_{\Psi_h} } ) \le \max \{ 12(1 - k ), 12 \} ,
$$
as desired.
\end{proof}

\section{Proofs of Theorems}

{\em Proof of Theorem \ref{thm1}.}
 First we observe that the  group $\GG_2$, given by  presentation  \er{e3}, can also be presented by generators and relations in the following form
 \begin{align}\label{prh}
   \langle \,   h_1, h_2    \,  \|  \, \wht R_1=1, \, \wht R_2=1, \dots , \, \wht  W_1 =1 ,
     \wht    W_2 =1  , \,  \dots  \, \rangle ,
\end{align}
 where, for every possible $i =1,2, \dots$, the defining words $\wht R_i$, $\wht W_i$ result from rewriting  of the words $R_i$, $W_i(\X_i)$, resp., of  presentation  \er{e3} so that letters $a_{j_1}^{ \e_1}$,  $x_{j_2}^{ \e_2}$, where  $a_{j_1} \in \A$, $x_{j_2} \in \cup_{i'=1}^\infty \X_{i'}$, $\e_1, \e_2 = \pm 1$, are replaced with the words
 $V_{2j_1+1}^{ \e_1}$,  $V_{2j_2}^{ \e_2}$ over $\{ h_1^{\pm 1}, h_2^{\pm 1} \}$, see \er{e2}.

  Now we will  show that the group $G$ given by the presentation \er{e0} naturally embeds into the group $\GG_2$ given by  \er{e3}. Assume that $U_0$ is a cyclically reduced word  over $\A^{\pm 1}$ and $U_0 = 1$ in $\GG_2$. By Lemma \ref{vk}, there is a disk diagram $\D_0$ over \er{e3} such that  $\ph (\p \D_0) \equiv U_0$.  Without loss of generality, we may assume that $\D_0$ is reduced. Note that a boundary path of $\D_0$ contains no $h$-edges. If $\D_0$ contains no face of type F3 whose boundary path  has $h$-edges then, turning $h$-edges into 1-edges by relabeling,  we may assume that $\D_0$  contains no $h$-edges. Hence, we may suppose that
$\D_0$ is a disk diagram over the presentation  \er{e1}.
Then it follows from Lemmas \ref{lem1}, \ref{vk} that $U_0 = 1$ in $G$. Thus, if $U_0$ is not trivial in $G$,
then  $\D_0$ must contain a face of type F3 with $h$-edges.  Therefore, Lemma \ref{gr2} applies to $\D_0$ and yields the existence of a vertex $v_\Pi$, where $\Pi$ is a face of type F3
 with $h$-edges, whose degree $d$ in the graph $\Psi_h$  is positive and is at most $\max\{ 12(1-k), 12 \} = 12$ as $k = \chi( \D_0 ) =1$. It follows from the definition of the graph  $\Psi_h$ and Lemmas~\ref{gr1},~\ref{cntsd}  that there are $d \le 12$ maximal \cntsd s  $\Gamma_1, \dots, \Gamma_d$ between an $h$-section $q$ of $\Pi$ and some $h$-sections of $\D_0$ so that every edge of $q$ is contained in exactly one of the contiguity arcs $\Gamma_i \wedge q$, $i=1, \dots, d$.
Therefore, there is an index $i^*$ such that $| \Gamma_{i^*}  \wedge q | \ge \tfrac 1{12}
|q|$.  Since $\p \D_0$ contains no $h$-edges, it follows that  $ \Gamma_{i^*}$ is a \cntsd\ between $q$ and
$q'$, where  $q'$ is an  $h$-section of a face $\Pi'$.  Denote $q_\Pi :=
\Gamma_{i^*} \wedge q$ and   $q_{\Pi'} := \Gamma_{i^*} \wedge q'$.   Since
$\ph(q_\Pi ) \equiv \ph( q_{\Pi'} )^{-1}$ and $| q_{\Pi} | \ge   \tfrac 1{12} |q| >
\tfrac 4{M} |q|$ as $n \ge 2$ and $M = 24n \ge 48$, it follows from Lemma~\ref{lem2} that  $\ph(q) \equiv
\ph( q' )^{-1}$. Hence,  by the definition of relations in \er{e3} and by the definition of a \cntsd , we have that
$\ph(\p \Pi) \equiv \ph( \p \Pi' )^{- 1}$  and the faces $\Pi$, $\Pi'$ form a reducible pair. This contradiction to the fact that $\D_0$ is reduced proves that $U_0 \overset G = 1$  and,  therefore,  $G$  naturally embeds in $\GG_2$, as claimed. Let $\nu_2 : G \to \GG_2$ denote this embedding.
\medskip

Consider a quadratic equation $W=1$ over $G$ of length $\ell \le n$. We need to prove that the equation $W=1$ has a solution in  the group $G$ given by \er{e0} \ifff\  the equation $\nu_2(W)=1$   has a solution in  the group $\GG_2$ given by \er{e3}.

First assume that $W=1$ has a solution in $G$. By Lemma~\ref{lem1}, the equation $\nu_1(W)=1$  has a solution in the group $\GG_1$ given by \er{e1}. Since  $G$ naturally embeds in   $\GG_2$, it follows from the definition of presentations \er{e1}, \er{e3} that there is a homomorphism $\GG_1 \to \GG_2$ which is identical on $G$. Hence, we may conclude
that the equation $\nu_2(W)=1$  has a solution in the group $\GG_2$, as desired.

Conversely, suppose that the equation $\nu_2(W)=1$   has a solution in the group $\GG_2$. Our goal is to show that
 $W=1$ has a solution in $G$.  Let
 $$
 W \equiv t_1^{\e_1} U_1   t_2^{\e_2} U_2  \dots  t_\ell^{\e_\ell} U_\ell,
 $$
 where  $t_1, \dots, t_\ell \in \cup_{i=1}^\infty \X_i $, $\e_1, \dots, \e_\ell \in \{ \pm 1\}$, and $U_1, \dots, U_\ell$ are some reduced or empty words over $\A^{\pm 1}$. Since $\nu_2(W)=1$   has a solution in $\GG_2$, there are nonempty words
 $T_1, \dots, T_\ell$ over the alphabet $\cup_{i=1}^\infty \X_i^{\pm 1} \cup \A^{\pm 1}\cup \{ h_1^{\pm 1}, h_2^{\pm 1}, 1 \}$ such that
 $$
 T_1^{\e_1} U_1   T_2^{\e_2} U_2  \dots  T_\ell^{\e_\ell} U_\ell  \overset {\GG_2} = 1 .
 $$
 Note that we would use the letter $1$ for the trivial element of $\GG_2$.
 By Lemma~\ref{vk}, there is a disk diagram $\D$ over presentation \er{e3} such that
 $$
 \ph(\p \D) \equiv  T_1^{\e_1} U_1   T_2^{\e_2} U_2  \dots  T_\ell^{\e_\ell} U_\ell .
 $$

Since $W=1$ is a quadratic equation, there is  a permutation
$$
\tau : \{1, \dots, \ell \} \to \{1, \dots, \ell \}
$$
such that $\tau^2 =1$, $\tau(i) \ne i$ and $t_i = t_{\tau(i)}$ for every $i \in \{1, \dots, \ell \}$. Hence, we may assume that $T_i \equiv T_{\tau(i)}$ for every $i \in \{1, \dots, \ell \}$. Denote
$$
\p \D =  r_1^{\e_1} u_1   r_2^{\e_2} u_2  \dots r_\ell^{\e_\ell} u_\ell ,
$$
where $r_i, u_i$ are paths  of $\p \D^{\pm 1}$ such that $\ph(r_i) \equiv T_i$, $\ph(u_i) \equiv U_i$ for every $i =1, \dots, \ell$.
Now we construct a surface diagram $\wtl \D$ from $\D$ by attaching the path $r_i$ to $r_{\tau(i)}$ for every $i =1, \dots, \ell$.  Note that $\chi(\wtl \D) = 1- \tfrac \ell 2$ and $\wtl \D $ has $k'$ \cncm s in its boundary $\p \wtl \D $,  $1 \le k' \le \ell$. Thus,  $\wtl \D$ is a surface diagram of type $(1- \tfrac \ell 2, k')$.

Let $c_1, \dots c_{k'}$ be \cncm s of  $\p \wtl \D$.  Note that each $c_j$ is a product of some paths in the set
$\{ u_1^{\delta_1}, \dots, u_\ell^{\delta_\ell} \}$, where $\delta_1, \dots, \delta_\ell \in  \{ \pm 1 \}$, and each
 $u_j^{\delta_j}$ occurs in one of $c_1, \dots c_{k'}$ exactly once.  If $\wtl \D$ contains a reducible pair
 of faces, then we remove this pair by the surgery described above and  obtain a surface diagram $\wtl \D'$
 with unchanged boundary paths and $| \wtl \D'_3(2) |= |\wtl \D_3(2) |-2$, where $|\wtl \D_3(2) |$ is the number of faces of type F3 in $\D$. It is not difficult to check that there exists a disk diagram $\D'$ such that
 $$
 \p \D' =  (r'_1)^{\e_1} u'_1  ( r'_2)^{\e_2} u'_2  \dots (r'_\ell)^{\e_\ell} u'_\ell ,
 $$
where $r'_i, u'_i$ are paths  of $\p (\D')^{\pm 1}$ such that
$\ph(r'_i) \equiv \ph(r'_{\tau(i)}) \equiv T'_i$, $\ph(u'_i) \equiv \ph(u_i)$ for every $i =1, \dots, \ell$. Moreover, the diagram $\wtl \D'$ can be obtained from  $\D'$  in the same manner as $\wtl \D$ was obtained from  $\D$, in particular, $| \wtl \D'_3(2) |= |\D'_3(2) |$. Hence, by induction on  the number $| \D_3(2) |$ of faces of type F3 in $\D$,  we may assume that the surface diagram  $\wtl \D$ is reduced.

Suppose that  $\wtl \D$  contains no faces of type F3 with $h$-edges. Then $\D$ also has this property, hence we can turn $h$-edges of $\D$  into 1-edges by relabeling and obtain thereby a disk diagram $\bar \D$ from $\D$ with no $h$-edges. Such a diagram  $\bar \D$ could be regarded as a diagram over presentation \er{e1}. The existence of
such  $\bar \D$ over  \er{e1} means that the equation $\nu_1(W)=1$   has a solution in the group $\GG_1$ given by \er{e1}.
By Lemma~\ref{lem1}, the equation $W=1$ has a solution in  $G$, as required.

Hence, we may assume that $\D$ contains faces of type F3 with $h$-edges. Clearly,  $\wtl \D$  also has this property and we may consider the graph $\Psi_h = \Psi_h(\wtl \D)$ on  $\wtl \D$ as defined before.  Since  $\p \wtl \D$  contains no  $h$-edges, Lemma~\ref{gr2} applies to the graph $\Psi_h$ on  $\wtl \D$ and yields the existence of a vertex $v_\Pi$, where $\Pi$ is a face of $\wtl \D$, whose positive degree is at most
$$
\max\{ 12(1-k), 12 \}= \max\{ 6\ell, 12 \} =6\ell \le 6n
$$
as $\ell \ge 2$. As above, it follows from the definition of the graph  $\Psi_h$ and Lemmas~\ref{gr1},~\ref{cntsd}  that there are $d \le 6n$ maximal \cntsd s  $\Gamma_1, \dots, \Gamma_d$ between an $h$-section $q$ of $\Pi$ and some $h$-sections of $\wtl \D$ so that every edge of $q$ is contained in exactly one of the contiguity arcs $\Gamma_i \wedge q$, $i=1, \dots, d$.
Therefore, there is an index $i^*$ such that $| \Gamma_{i^*}  \wedge q | \ge \tfrac 1{6n}
|q|$.   Let $ \Gamma_{i^*}$ be a \cntsd\ between $q$ and
$q'$, where  $q'$ is an $h$-section of a face $\Pi'$.  Denote $q_\Pi :=
\Gamma_{i^*} \wedge q$ and   $q_{\Pi'} := \Gamma_{i^*} \wedge q'$.   Since
$\ph(q_\Pi ) \equiv \ph( q_{\Pi'} )^{-1}$ and $ | q_{\Pi} | \ge   \tfrac 1{6n} |q| =
\tfrac 4{M} |q|$ as $M = 24n$, it follows from Lemma~\ref{lem2} that  $\ph(q) \equiv
\ph( q' )^{-1}$. Hence,  by the definition of relations in \er{e3} and by the definition of a \cntsd , we have that
$\ph(\p \Pi) \equiv \ph( \p \Pi' )^{-1}$  and the faces $\Pi$, $\Pi'$ form a reducible pair in $\wtl \D$.
This contradiction to the fact that $\wtl \D$ is reduced  proves that it is impossible that $\D$ contains faces of type F3 with $h$-edges.
Hence, the equation $W=1$ has a solution in $G$, as desired.

Thus, the group $\GG_2$ has all of the required properties of the group $H$ of the statement of Theorem~\ref{thm1} and the proof of Theorem~\ref{thm1} is complete.
\qed
\medskip

{\em Proof of Theorem \ref{thm2}.}  (a) \ Let
$
W_1 =1 ,  W_2 =1, \dots
$
be the enumeration, fixed in \eqref{enum0}, of all quadratic equations over $G$ such that, for every $i \ge 1$,   $| W_i|_\X \le n $ and $W_i =1$ has a solution in $G$. Recall that the enumeration $\cup_{i=1}^{\infty} \X_i= \{x_1,x_2,\dots\}$ has the property that if $x_j \in \X_k$, $x_{j'} \in \X_{k'}$ and $k <k'$ then  $j <j'$. This property implies, for every $W_i=1$, that if $x_{k_1},\dots,x_{k_{\ell}}$ are the letters of $\cup_{i=1}^{\infty} \X_i$ that appear in $W_i(\X_i)^{\pm 1}$, then $k_1,\dots, k_{\ell}\leq ni$. Thus, in view of the  relations $x_j^{-1} V_{2j} = 1$ of the
presentation \er{e3}, it follows that $(V_{2k_1}, \dots, V_{2k_\ell} )$ is a solution tuple to the equation $\nu_2(W_i)=1$ over $\GG_2$. Since  $|V_k| \le (M(k+1) +1)M$ and $\ell \le n$, we further obtain that
\begin{equation*}
\sum_{j'=1}^\ell | V_{2k_{j'}} | \le nM( M(2ni+1) +1 ) \le 3n^2M^2 i = C n^4 i ,
\end{equation*}
where $C = 3\cdot 24^2$ as $M = 24n$.
\smallskip

(b) Since the presentation \er{e0} of $G$ is recursively enumerable, it follows that the set of all words
$U$ over $\A^{\pm 1}$  such that $U \overset G = 1$  is also  recursively enumerable. More generally, we can analogously obtain that  all quadratic equations  $W =1$ over $G$ of length $\le n$ that have solutions in $G$ can be
recursively enumerated. The last observation means that we can create a recursive enumeration \eqref{enum0}. Now we can  use constructions of \er{e3}, \er{prh} and
see that defining relations of the presentation \er{prh} for $\GG_2$ can
be recursively enumerated as well.
\smallskip

(c) The existence of an algorithm that detects whether a quadratic equation $W=1$ over $G$ of length $\le n$ has a solution enables us to effectively write down all quadratic equations  $W =1$ over $G$ of length $\le n$ that have solutions in $G$. Hence, we can effectively  create an enumeration  \eqref{enum0} and, using   constructions of \er{e3}, \er{prh}, write down
 all relations of the form  $\wht  W_1 =1,      \wht    W_2 =1, \dots$ in the presentation  \er{prh}.
 Since the presentation \er{e0} of $G$ is decidable, we can also effectively
write down all relations  of the form  $\wht R_1=1,  \wht R_2=1, \dots$ in the presentation  \er{prh}. Hence, the presentation  \er{prh} is decidable. Since the map $a_i \to V_{2i+1}$, $i=1,2, \dots$, extends to the embedding  $\mu_n : G \to H $ and the set of defining relations of  presentation  \er{prh} is recursive,  we see that the embedding $\mu_n : G \to H$ can be effectively constructed.
  Theorem~\ref{thm2} is proven.  \qed


\begin{thebibliography}{[9]}

\bibitem[1]{CE} L. P. Comerford and C. C. Edmunds, {\em Quadratic equations over free groups and free products},
J. Algebra {\bf 68}(1981), 276--297.

\bibitem[2]{Cq} M. Culler, {\em Using surfaces to solve equations in free groups},
Topology  {\bf 20}(1981), 133--145.

\bibitem[3]{E1} C. C. Edmunds, {\em On the endomorphism problem for free groups}, Comm. in Algebra
{\bf 3}(1957), 1--20.

\bibitem[4]{E2} C. C. Edmunds, {\em On the endomorphism problem for free groups II}, Proc. London Math. Soc.
{\bf 38}(1979), 153--168.

\bibitem[5]{FK} E. V. Frenkel and A. A. Klyachko,  {\em  Commutators cannot be proper powers in metric small-cancellation torsion-free groups},   \texttt{http://arxiv.org/abs/1309.0571}

\bibitem[6]{HNN} G. Higman, B. H. Neumann, and H. Neumann, {\em Embedding theorems for groups}, J.
London Math. Soc. {\bf 24}(1949), 247--254.

\bibitem[7]{Iv94}  S. V. Ivanov,  {\em The free Burnside groups of sufficiently
large exponents}, {Internat. J. Algebra Comp.}  {\bf 4}(1994), 1--308.

\bibitem[8]{KV}  O. Kharlampovich and A. Vdovina, {\em  Linear estimates
for solutions of quadratic equations in free groups},  {Internat. J. Algebra Comp.}  {\bf 22}(2012),
1250004-1--1250004-16.

\bibitem[9]{LS} R. C. Lyndon and P. E. Schupp, {\em Combinatorial
group theory}, Springer-Verlag, 1977.

\bibitem[10]{LM}   I. G. Lysenok and A. G. Myasnikov, {\em A polynomial
bound for solutions of quadratic equations in free groups},  Trud.
Mat. Inst. Steklova {\bf 274}(2011), 148--190.

\bibitem[11]{Olq}
A. Yu. Ol'shanskii,  {\em Diagrams of homomorphisms of surface groups},
Sibirsk. Mat. Zh. {\bf 30}(1989), 150--171.


\bibitem[12]{Ol}
A. Yu. Ol'shanskii,  {\em Geometry of defining relations in
groups}, Nauka, Moscow, 1989; English translation: {\em Math.
and Its Applications, Soviet series},  vol. 70, Kluwer Acad.
Publ., 1991.

\bibitem[13]{OS} A. Yu. Ol'shanskii and M. V. Sapir,
{\em The conjugacy problem and Higman embeddings},
Memoirs Amer. Math. Soc. {\bf 170}(2004), 133pp.


\end{thebibliography}
\end{document}